\newcommand{\<}{\langle}
\renewcommand{\>}{\rangle}
\newcommand{\R}{{\mathbb R}}
\newcommand{\N}{{\mathbb N}}
\newcommand{\eps}{{\varepsilon}}
\renewcommand{\phi}{{\varphi}}
\newcommand{\rcirc}{{\operatorname{\overset{\hphantom{i}_\circ}{\textit R}}}}
\newcommand{\Ric}{{\operatorname{{Ric}}}}
\newcommand{\scal}{{\operatorname{{scal}}}}
\newcommand{\Id}{{\operatorname{Id}}}
\newcommand{\Tr}{{\operatorname{Tr}}}
\newcommand{\dvol}{{\operatorname{\textit{dvol}}}}
\newcommand{\vol}{{\operatorname{vol}}}
\newcommand{\restr}[1]{\lower0.4ex\hbox{$|$}\lower0.7ex
  \hbox{$\scriptstyle{#1}$}}
\newtheorem{theorem}{Theorem}
\newtheorem{lemma}[theorem]{Lemma}
\newtheorem{proposition}[theorem]{Proposition}
\newtheorem{corollary}[theorem]{Corollary}
\newtheorem{maintheorem}[theorem]{Main Theorem}
\theoremstyle{definition}
\newtheorem{remark}[theorem]{Remark}
\newcommand{\Section}[1]{\section{#1}\setcounter{theorem}{0}}
\begin{document}

\title[Local symmetry of harmonic spaces as determined by spectra of geodesic spheres]
{Local symmetry of harmonic spaces as determined by the spectra of small geodesic
spheres}

\author{Teresa Arias-Marco}
\address{Departamento de Matem\'aticas, Universidad de Extremadura,
06071 Badajoz, Spain}
\email{ariasmarco@unex.es}

\author{Dorothee Schueth}
\address{Institut f\"ur Mathematik, Humboldt-Universit\"at zu
Berlin, D-10099 Berlin, Germany}
\email{schueth@math.hu-berlin.de}

\keywords{Harmonic space, curvature invariants,
second fundamental form, Ledger's recursion formula,
geodesic spheres, geodesic balls, heat invariants, isospectral manifolds,
Damek-Ricci spaces}
\subjclass[2000]{53C25, 53C20, 58J50, 58J53, 53C30, 22E25}

\thanks{The authors were partially supported by DFG Sonderforschungsbereich~647.
The first author's work has also been supported by
D.G.I.~(Spain) and FEDER Project MTM2010-15444, by Junta de Extremadura and FEDER
funds, and the program
``Estancias de movilidad en el extranjero `Jos\'e Castillejo' para j\'ovenes doctores''
of the Ministry of Education (Spain).\\
\hbox to\the\parindent{}\textit{Dedication.} Dorothee Schueth would like to
dedicate this article to her former high school teacher, Martin Berg.
It was he who first made her see the beauty of Mathematics.}

\begin{abstract}
We show that in any harmonic space, the eigenvalue spectra of the
Laplace operator on small geodesic spheres
around a given point determine the norm $|\nabla R|$
of the covariant derivative
of the Riemannian curvature tensor in that point. In particular,
the spectra of small geodesic spheres in a harmonic space
determine whether the space is locally symmetric.
For the proof we use the first few
heat invariants and consider certain coefficients
in the radial power series expansions of the curvature
invariants $|R|^2$ and $|\Ric|^2$ of the geodesic
spheres.
Moreover, we obtain analogous results for geodesic balls with
either Dirichlet or Neumann boundary conditions.
We also comment on the relevance of these results to constructions
of Z.I.~Szab\'o.
\end{abstract}

\maketitle

\Section{Introduction}
\label{sec:intro}

\noindent
For a compact closed Riemannian manifold $S$ the {\it spectrum\/} of~$S$
is the eigenvalue spectrum, including multiplicities,
of the associated (positive semi-definite)
Laplace operator~$\Delta$ acting on smooth functions.
A central question of inverse spectral geometry asks to which extent
the geometry of~$S$ is determined by its spectrum. The so-called
{\it heat invariants\/} $a_k(S)$ of~$S$ are examples of geometric
invariants which are determined by the spectrum of~$S$; indeed,
they are the coefficients in the famous asymptotic expansion
by Minakshisundaram-Pleijel,
$$
\Tr(\exp(-t\Delta))\sim(4\pi t)^{-\dim(S)/2}\sum_{k=0}^\infty a_k(S) t^k
$$
for $t\downarrow0$. The first few of these coefficients are given
by
\begin{equation*}
a_0(S)=\vol(S),\mbox{\ \ }
a_1(S)=\frac16\int_S\scal\,\dvol_S\,,\mbox{\ \ }
a_2(S)=\frac1{360}\int_S(5\scal^2-2|\Ric|^2+2|R|^2)\dvol_S\,,
\end{equation*}
where $\scal$, $\Ric$, and $R$ denote the scalar curvature, the Ricci operator,
and the Riemannian curvature operator of~$S$, respectively. In general,
each $a_k(S)$ is the integral over~$S$ of certain curvature invariants; see~\cite{Gi}
for more information.

Nevertheless, there exist many examples of pairs or families
of {\it isospectral\/} Riemannian manifolds
(i.e., sharing the same spectrum) which are not isometric, sometimes not
even locally isometric;
see, for example, the survey article~\cite{Go00}. Still, many questions remain open;
for example, it is not known whether a locally symmetric compact closed Riemannian
manifold can be isospectral to a locally nonsymmetric Riemannian manifold.

On the other hand, the geometry of {\it geodesic spheres\/} plays an interesting
role in Riemannian geometry. Chen and Vanhecke~\cite{CV}
formulated the following general question: To what extent do the properties of small
geodesic spheres determine the Riemannian geometry of the ambient space?
For example, Gray and Vanhecke~\cite{GV} studied the information contained
in the volume function of small geodesic spheres and investigated the
question whether a Riemannian manifold whose geodesic spheres have the
same volumes as spheres in euclidean space must necessarily be flat
(answering this question in the positive under various choices of
additional assumptions).

In the context of inverse spectral geometry,
an interesting special version of the above question is: To what extent do the spectra of
small geodesic spheres in a (possibly noncompact) Riemannian manifold~$M$
determine the geometry of~$M$\,?
For example, Theorem~6.18 in~\cite{CV} uses the information contained
in the heat invariants $a_0$ and~$a_1$ of small geodesic spheres (viewed as
functions of the radius) and concludes local isometry of manifolds with adapted holonomy
to certain model spaces under the assumption that all small geodesic spheres
around each point are isospectral to the corresponding geodesic spheres in those
model spaces.

In order to arrive at such and similar results, one uses
radial power series expansions of curvature invariants, both of the ambient
space and of the geodesic spheres.
In general, even the first few coefficients of such expansions become very
complicated; see, for example, the various formulas in \cite{GV} or~\cite{CV}.
One setting in which a quite restrictive geometric assumption on the ambient
space makes the calculations considerably easier is the setting of {\it harmonic\/}
ambient spaces.

A manifold is called harmonic if the volume density function
of the geodesic exponential map is radial around each point. The notion of harmonicity
was first introduced by Copson and Ruse~\cite{CR} and 
intensively studied by Lichnerowicz~\cite{Li44}; see also~\cite{RWW}.
Chapter~6 of the book by
Besse~\cite{Be} gives a useful survey of properties of harmonic spaces.
One of the important facts about harmonic spaces is that
they are Einstein~\cite{Be} and hence analytic~\cite{DK}
(the latter result was not yet known when~\cite{Be} was written).
A locally symmetric manifold is harmonic if and only if it is flat or of rank one;
the famous Lichnerowicz conjecture postulated that, conversely,
each harmonic space
is locally symmetric; i.e., satisfies $\nabla R=0$ (this condition is
classically known to be equivalent to the condition that the local
geodesic symmetries around each point be isometries).
For the case of compact manifolds with finite fundamental group
the Lichnerowicz conjecture was proved by Szab\'o~\cite{Sz90};
however, Damek and Ricci gave examples of noncompact homogeneous
harmonic manifolds which are not locally symmetric
in infinitely many dimensions greater or equal to seven~\cite{DR}.
These spaces are usually referred to as {\it Damek-Ricci spaces};
see~\cite{BTV} for more information.

Specializing the above question about the information contained
in the spectra of small geodesic spheres to the
setting of harmonic spaces,
we are able to prove in the present paper that the spectra of 
small geodesic spheres in a harmonic space determine whether the
space is locally symmetric (see Corollary~\ref{cor:main} below).
More precisely, we obtain:

\begin{maintheorem}
\label{thm:main}
Let $M_1$ and $M_2$ be harmonic spaces, and let $p_1\in M_1$,
$p_2\in M_2$. If there exists $\eps>0$ such that for each
$r\in(0,\eps)$ the geodesic spheres $S_r(p_1)$ and $S_r(p_2)$
are isospectral, then $|\nabla R|_{p_1}^2=|\nabla R|_{p_2}^2$.
\end{maintheorem}

\begin{corollary}
\label{cor:main}
Let $M_1$ and $M_2$ be harmonic spaces. Assume that the
hypothesis of Theorem~\ref{thm:main} is satisfied for
\emph{each} pair of points $p_1\in M_1$, $p_2\in M_2$.
Then $M_1$ is locally symmetric if and only if $M_2$ is locally
symmetric.
\end{corollary}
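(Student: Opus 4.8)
The plan is to deduce the corollary directly from the Main Theorem; the entire analytic content already resides there, so what remains is a short logical argument about quantifiers. First I would record the standard reformulation of local symmetry. A Riemannian manifold is locally symmetric precisely when its curvature tensor is parallel, $\nabla R\equiv0$, and since $|\cdot|_p^2$ is a genuine squared norm on the tensor space at each point, this is equivalent to the pointwise vanishing $|\nabla R|_p^2=0$ for every point $p$. Thus the assertion ``$M_i$ is locally symmetric'' translates into ``$|\nabla R|_p^2=0$ for all $p\in M_i$,'' reducing the geometric statement to a statement about a single scalar invariant.

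Next I would unfold the hypothesis and apply Theorem~\ref{thm:main}. By assumption, for \emph{each} pair $(p_1,p_2)\in M_1\times M_2$ the hypothesis of the Main Theorem holds, i.e.\ there is some $\eps>0$ (which may depend on the pair) such that $S_r(p_1)$ and $S_r(p_2)$ are isospectral for all $r\in(0,\eps)$. Feeding each such pair into Theorem~\ref{thm:main} yields the scalar identity $|\nabla R|_{p_1}^2=|\nabla R|_{p_2}^2$, and — this is the point to handle with a little care — this identity holds simultaneously for \emph{every} choice of $p_1\in M_1$ and $p_2\in M_2$, since the Main Theorem is applicable to each pair independently.

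From here the equivalence is immediate and symmetric. Suppose $M_1$ is locally symmetric, so that $|\nabla R|_{p_1}^2=0$ for all $p_1\in M_1$. Fixing any single such $p_1$ and letting $p_2$ range over $M_2$, the identity above gives $|\nabla R|_{p_2}^2=|\nabla R|_{p_1}^2=0$ for every $p_2\in M_2$; hence $M_2$ is locally symmetric. Interchanging the roles of $M_1$ and $M_2$ establishes the reverse implication, completing the proof.

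I do not expect a genuine obstacle at this stage: all the hard work of extracting $|\nabla R|^2$ at a point from the spectra of small geodesic spheres is carried out in Theorem~\ref{thm:main}. The only step deserving explicit mention is the passage from the per-pair scalar equalities to the global conclusion, where one fixes a locally symmetric point on one side and sweeps across all points on the other; this is legitimate precisely because the corollary's hypothesis quantifies over all pairs of points.
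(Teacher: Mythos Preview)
Your argument is correct and is exactly the intended one: the paper does not give a separate proof of Corollary~\ref{cor:main}, since it follows immediately from Theorem~\ref{thm:main} via the equivalence of local symmetry with $|\nabla R|^2\equiv0$ and the sweeping argument you describe.
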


In particular, note that in the case of {\it locally homogeneous\/} harmonic spaces
$M_1$ and~$M_2$, the hypothesis of Theorem~\ref{thm:main}
implies that $M_1$ is locally symmetric if and only if $M_2$ is locally symmetric.
Actually, all known examples of harmonic spaces are locally homogeneous;
it is an open question whether there exist harmonic spaces which are not
locally homogeneous.

Interestingly, our result implies that certain pairs of geodesic
spheres which were claimed to be isospectral by Szab\'o in \cite{Sz01},~\cite{Sz05}
are actually {\it not\/} isospectral. In fact, Szab\'o considered
(as the featured examples in a more general construction) geodesic
spheres in certain symmetric spaces~$M_1$ (namely, quaternionic hyperbolic
space of real dimension $4m\ge12$) and in certain associated
locally nonsymmetric Damek-Ricci
spaces $M_2$ of the same dimension (see also Remark~\ref{rem:damekricci}).
He stated that every pair
of geodesic spheres $S_r(p_1)\subset M_1$ and $S_r(p_2)\subset M_2$
of the same radius was isospectral. Since these
ambient manifolds $M_1$ and $M_2$ are harmonic and homogeneous, and $M_1$ is
locally symmetric while $M_2$ is not, Corollary~\ref{cor:main} immediately
implies that Szab\'o's result cannot be correct. Note that it was
F\"urstenau~\cite{Fu} who first
discovered that actually there was a gap in Szab\'o's isospectrality argument.
The question of whether that proof could be repaired or not had since
remained open; our result settles this question in the negative.

The incorrect examples of geodesic spheres mentioned above had the
notable property that one is homogeneous and the other not.
While it remains unknown whether a homogeneous metric on a sphere
can be isospectral to a non-homogeneous one, Szab\'o in an earlier article~\cite{Sz99}
did construct a pair of isospectral metrics, only one of which is homogeneous,
on the product of a sphere and a torus (those results are not
affected by the error in the later papers).

We obtain analogs of our above results for geodesic balls endowed with
either Dirichlet or Neumann boundary conditions; see Theorem~\ref{thm:balls}
and Corollary~\ref{cor:balls}. Similarly as above, this implies
that Szab\'o's examples in \cite{Sz01},~\cite{Sz05}
of isospectral geodesic balls (of any given radius)
in quaternionic hyperbolic space of real dimension at least~$12$ and in certain
associated locally nonsymmetric Damek-Ricci spaces were erroneous.

Note that nevertheless there do exist isospectral pairs and even
continuous families of isospectral metrics on spheres and balls;
the first such examples were due to Gordon~\cite{Go01}.

In order to prove Theorem~\ref{thm:main} we use the heat invariants $a_0(S_r(p))$ and
$a_2(S_r(p))$ of geodesic spheres in harmonic spaces.
In particular, we study the coefficients
of~$r^2$ in the radial power series expansions of
$\frac1{\vol(S_r(p))}\int_{S_r(p)}|\Ric^S|^2\dvol_{S_r(p)}$ and $\frac1{\vol(S_r(p))}\int_{S_r(p)}|R^S|^2\dvol_{S_r(p)}$,
where $\Ric^S$ and $R^S$ denote the Ricci operator and the Riemannian curvature
operator of~$S_r(p)$. From the form of these coefficients (see
Proposition~\ref{prop:coeffs} and its mean value version
Proposition~\ref{prop:intcoeffs}), we are
able to conlude that the heat invariants $a_0$ and $a_2$ of $S_r(p)$, viewed
as functions of~$r$, together
determine the value of $|\nabla R|^2$ at the midpoint~$p$.
Note that the same is not true for $a_0(S_r(p))$ alone;
see Remark~\ref{rem:damekricci}. Moreover,
in the harmonic setting,
the function $r\mapsto a_1(S_r(p))=\frac16\int_{S_r(p)}\scal^S\,\dvol_{S_r(p)}$
does actually not contain more information
than $r\mapsto a_0(S_r(p))=\vol(S_r(p))$; see Remark~\ref{rem:volscal}.
So it is indeed necessary for our purpose to consider $a_2(S_r(p))$.

Our computations rely heavily on the harmonicity of the ambient space.
Note that they are related to certain more general computations in~\cite{GV} and~\cite{CV};
for example, Theorem~8.1 of~\cite{CV} actually includes a kind of analog to
our Proposition~\ref{prop:coeffs}, and this even for general, not only for
harmonic manifolds; however, that theorem
contains information only on the coefficients of~$r^j$ with $j\le0$,
while we need the coefficients of~$r^2$.
In fact, in the harmonic case, the lower order coefficients turn out to be
determined already by the function $r\mapsto a_0(S_r(p))=\vol(S_r(p))$; see
Proposition~\ref{prop:coeffs} and Remark~\ref{rem:volscal}(i).

This paper is organized as follows:

In Section~\ref{sec:prelim} we gather the necessary background on harmonic
spaces, mostly following~\cite{Be}; in particular, we recall
Ledger's recursion formula for the power series expansion of the
second fundamental form of geodesic spheres, and the resulting
curvature identities in harmonic spaces.

In Section~\ref{sec:curv}, we study the coefficients of $r^j$ for $j\le2$
in $|\Ric^{S_r(p)}|^2_{\exp(ru)}$ and $|R^{S_r(p)}|^2_{\exp(ru)}$ for unit
tangent vectors~$u$ of harmonic spaces, using the power series expansion
of the second fundamental form and its radial covariant derivative,
as well as the Taylor series expansion of the Riemannian curvature tensor.
Proposition~\ref{prop:coeffs} is the main result of this section.

Section~\ref{sec:proof} is devoted to the proof of the Main Theorem~\ref{thm:main}.
In preparation for this, we first derive a mean value version of
Proposition~\ref{prop:coeffs}; see Proposition~\ref{prop:intcoeffs}.

Finally, in Section~\ref{sec:balls}, we prove the analog of Theorem~\ref{thm:main}
for geodesic balls. We consider the heat coefficients
of geodesic balls in harmonic spaces
and show that the functions $r\mapsto a_0(B_r(p))$ and $r\mapsto a_2(B_r(p))$
(either for Dirichlet or for Neumann boundary conditions) together
determine the value of $|\nabla R|^2$ at the midpoint~$p$ of the balls.
More precisely, we show that the coefficient of~$r^3$ in the radial
power series expansion of the quotient $a_2(B_r(p))/a_0(B_r(p))$ is a sum
of a nonzero multiple of~$|\nabla R|^2_p$ and
of terms determined by the function $r\mapsto a_0(B_r(p))$.

\Section{Preliminaries}
\label{sec:prelim}

\subsection{Volume density and the shape operator of geodesic spheres}\

\noindent
In the following, let $M$ be a complete, connected, $n$-dimensional Riemannian manifold.
For $p\in M$, let $\exp_p=\exp\restr{T_pM}:T_pM\to M$ denote the associated geodesic
exponential map. For a vector $v\in T_pM$ we denote by $\gamma_v$ the geodesic
with initial velocity~$v$. Identifying $T_v(T_pM)$ with $T_pM$, we regard
the differential $d(\exp_p)_v$ as a linear map from $T_pM$ to $T_{\exp v}M$.
We denote parallel translation along~$\gamma_v$ by
$P_{\gamma_v}^{s,t}:T_{\gamma_v(s)}M\to T_{\gamma_v(t)}M$.
Given any unit vector $u\in S_1(0_p):=\{u\in T_pM\mid |u|=1\}$
and $r\in\R$, we consider the volume density
$$\theta_u(r):=
\det\bigl(P_{\gamma_u}^{r,0}\circ d(\exp_p)_{ru}\bigr).
$$
Note that $\theta_u(r)$ is the infinitesimal volume distortion
of the map $\exp_p$ at the point $ru\in T_pM$. Recall the Gauss lemma:
The vector $d(\exp_p)_{ru} u$ is a unit vector perpendicular
to each $d(\exp_p)_{ru} w$ with $w\perp u$. Thus, for each $r\in(0,i(p))$,
where $i(p)$ denotes the injectivity radius of~$M$ at~$p$,
\begin{equation}
\label{eq:vtheta}
v_u(r):=r^{n-1}\theta_u(r)
\end{equation}
is the infinitesimal volume distortion at $u$ of the map
$$S_1(0_p)\ni u\mapsto\gamma_u(r)=\exp(ru)\in S_r(p),
$$
where $S_r(p)\subset M$ denotes the geodesic sphere of radius~$r$ around~$p$.
Let $\sigma_u(r)$ denote the shape operator of $S_r(p)$ at~$\exp(ru)$;
that is,
$$\sigma_u(r):=(\nabla\nu)\restr{T_{\exp(ru)}M}\,,
$$
where $\nabla$ is the Levi-Civita connection of~$M$ and
$\nu$ denotes the outward pointing unit normal vector field on
the geodesic ball $B_{i(p)}(p)\setminus\{p\}$.
In particular, $\nu\circ\gamma_u=\dot\gamma_u$, $\sigma_u\nu=0$,
and the image of $\sigma_u(r)$ is contained in $T_{\gamma_u(r)}S_r(p)$.
It is well-known that for all $r\in(0,i(p))$,
\begin{equation}
\label{eq:volgrowth}
v'_u(r)/v_u(r)=\Tr(\sigma_u(r)),
\end{equation}
and that the covariant derivative~$\sigma'_u$ of the
endomorphism field $\sigma_u$ along $\gamma_u\restr{(0,i(p))}$
satisfies the so-called Riccati equation
\begin{equation}
\label{eq:riccati}
\sigma'_u=-\sigma_u^2-R_{\dot\gamma_u},
\end{equation}
where $R$ is the Riemannian curvature tensor of $M$, given by
$R(x,y)z=-\nabla_x\nabla_yz+\nabla_y\nabla_xz+\nabla_{[x,y]}z$, and where
$R_\nu:=R(\nu,\,.\,)\nu$. (Note that here we use the same sign for~$R$
as Besse~\cite{Be}.) Let $C_u(r):=r\sigma_u(r)$. This endomorphism
field along $\gamma_u\restr{(0,i(p))}$
is smoothly extendable to $r=0$ by $C_u(0):=I_u$, where $I_u$ is defined
by $I_u(u)=0$ and $I_u\restr{\{u\}^\perp}=\Id_{\{u\}^\perp}$.
Moreover, from~(\ref{eq:riccati})
one can derive Ledger's recursion formula for the covariant derivatives
of $C_u$ at $r=0$ (see, e.g.,~\cite{CV}):
$$
(k-1)C_u^{(k)}(0)=-k(k-1)R_u^{(k-2)}-\sum_{\ell=0}^k\binom k\ell C_u^{(\ell)}(0)C_u^{(k-\ell)}(0)
$$
for all $k\in\N$, where $R_u^{(k)}$ is
the $k$-th covariant derivative of the
endomorphism field $R_{\dot\gamma_u}$ along~$\gamma_u$ at $r=0$. This formula allows
one to successively compute the $C_u^{(k)}(0)$ in terms of the
en\-do\-mor\-phisms~$R_u^{(k)}$ of $T_pM$. Forming
the Taylor series of $C_u$ and dividing by~$r$, one obtains (see, e.g., \cite{Be}, \cite{CV}):
\begin{equation}
\label{eq:pow}
\begin{split}
P^{r,0}_{\gamma_u}\circ\sigma_u(r)\circ P^{0,r}_{\gamma_u}={}&\frac1r I_u-\frac r3 R_u
-\frac{r^2}4 R'_u -\bigl(\frac1{10}R''_u+\frac1{45}R_uR_u\bigr)r^3\\
&-\bigl(\frac1{36}R'''_u+\frac1{72}R_uR'_u
+\frac1{72}R'_uR_u\bigr)r^4\\
&-\bigl(\frac1{168}R^{(4)}_u+\frac1{210}R_uR''_u+\frac1{210}R''_uR_u+\frac1{112}R'_uR'_u
+\frac2{945}R_uR_uR_u\bigr)r^5\\
&+O(r^6).
\end{split}
\end{equation}

\subsection{Curvature identities in harmonic spaces}\

\noindent
The manifold $M$ is called a \emph{harmonic space} if for every $p\in M$
the above function $\theta_u$ does not depend on $u\in S_1(0_p)$.
An equivalent condition is that for all $r\in(0,i(p))$, the geodesic spheres $S_r(p)$ have
constant mean curvature (recall equations~(\ref{eq:vtheta}), (\ref{eq:volgrowth})).
For more information on harmonic spaces see \cite{RWW} or~\cite{Be}.
If $M$ is harmonic then the function $\theta_u$ does in fact not even depend on~$p$; that is,
there exists $\theta:[0,\infty)\to\R$ such that
$$\theta_u(r)=\theta(r)
$$
for all $u\in TM$
with $|u|=1$.
Moreover, even the local or infinitesimal versions of the above
condition imply that the manifold is Einstein~\cite{Be} and therefore
analytic~\cite{DK}. Hence, the local or infinitesimal versions of the above conditions
are equivalent to the global versions.
Since $\theta_u(r)$ depends only on~$r$, so does $v_u(r)$ and
hence $\Tr(\sigma_u(r))$.
From this one can successively derive, using the expansion~(\ref{eq:pow}):

\begin{proposition} [see \cite{Be}, Chapter~6]
\label{prop:constants}
If $M$ is harmonic then there exist constants $C,H,L\in\R$ such that for all $p\in M$ and
all $u\in T_pM$ with $|u|=1${\rm:}
\begin{itemize}
\item[(i)] $\Tr(R_u)=C$; in particular{\rm:}
\item[(ii)] $\Tr(R^{(k)}_u)=0$ for all $k\in\N$.
\item[(iii)] $\Tr(R_uR_u)=H$; in particular{\rm:}
\item[(iv)] $\Tr(R_uR'_u)=0$ and
\item[(v)] $\Tr(R_uR''_u)=-\Tr(R'_uR'_u)$.
\item[(vi)] $\Tr(32R_uR_uR_u-9R'_uR'_u)=L$.
\end{itemize}
\end{proposition}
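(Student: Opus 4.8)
The plan is to read off all six identities from the trace of the power series expansion~(\ref{eq:pow}). Since $M$ is harmonic, $\theta_u(r)=\theta(r)$ is independent of both~$u$ and~$p$; hence so is $v_u(r)=r^{n-1}\theta(r)$, and therefore, by~(\ref{eq:volgrowth}), so is the function $r\mapsto\Tr(\sigma_u(r))=v_u'(r)/v_u(r)$. As the trace is invariant under conjugation by the parallel translations $P^{0,r}_{\gamma_u}$, applying $\Tr$ to~(\ref{eq:pow}) gives a Laurent series
\[
\Tr(\sigma_u(r))=\frac{n-1}{r}-\frac13\Tr(R_u)\,r-\frac14\Tr(R'_u)\,r^2-\Bigl(\frac1{10}\Tr(R''_u)+\frac1{45}\Tr(R_uR_u)\Bigr)r^3-\cdots,
\]
every coefficient of which is thereby forced to be a universal constant (independent of $u$ and~$p$). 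From here I would extract the identities degree by degree.

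The coefficient of~$r$ is $-\frac13\Tr(R_u)$, which gives~(i) (this is just the Einstein condition for~$M$). Applying~(i) at each point of a geodesic~$\gamma_u$ shows that $r\mapsto\Tr\bigl(R_{\dot\gamma_u(r)}\bigr)$ is constant; since taking the trace commutes with covariant differentiation along~$\gamma_u$, the $k$-th derivative of this function at~$0$ is $\Tr(R^{(k)}_u)$, which yields~(ii). Invoking~(ii), the coefficient of~$r^3$ collapses to $-\frac1{45}\Tr(R_uR_u)$, giving~(iii). For~(iv) and~(v) I would argue in the same spirit: by~(iii) applied along geodesics, the function $r\mapsto\Tr\bigl(R_{\dot\gamma_u(r)}^2\bigr)$ is constantly equal to~$H$. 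Differentiating once at $r=0$ — using the Leibniz rule, cyclicity of the trace, and once more that the trace of a covariant derivative along~$\gamma_u$ is the derivative of the trace — gives $2\Tr(R_uR'_u)=0$, i.e.~(iv); differentiating twice gives $2\Tr(R'_uR'_u)+2\Tr(R_uR''_u)=0$, i.e.~(v). (Identity~(iv) can alternatively be read off directly from the coefficient of~$r^4$ after using~(ii).)

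Finally, for~(vi) I would turn to the coefficient of~$r^5$ in $\Tr(\sigma_u(r))$. By~(ii) the $\Tr(R^{(4)}_u)$-term disappears; merging the two $\Tr(R_uR''_u)$-contributions via cyclicity and then substituting $\Tr(R_uR''_u)=-\Tr(R'_uR'_u)$ from~(v), the $r^5$-coefficient reduces to a nonzero scalar multiple of $32\Tr(R_uR_uR_u)-9\Tr(R'_uR'_u)$, which is therefore constant; this is~(vi). The only points demanding care are the arithmetic of the rational coefficients in the $r^5$-term — one must check that, after~(ii) and~(v), the $r^5$-bracket becomes $\frac1{15120}\bigl(32\Tr(R_uR_uR_u)-9\Tr(R'_uR'_u)\bigr)$ — together with the (elementary) justification that taking the trace commutes with covariant differentiation along~$\gamma_u$. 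Neither is a genuine obstacle; they are simply where the computational work lies.
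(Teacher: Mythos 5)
Your proposal is correct and takes essentially the same route the paper indicates for Proposition~\ref{prop:constants}: take traces in~(\ref{eq:pow}), use that harmonicity makes $\Tr(\sigma_u(r))$ independent of $u$ and $p$, and extract the identities coefficient by coefficient (your $r^5$-arithmetic, $\tfrac1{105}-\tfrac1{112}=\tfrac1{1680}=\tfrac9{15120}$ and $\tfrac2{945}=\tfrac{32}{15120}$, matches the $-\tfrac1{15120}L$ in~(\ref{eq:powtrharm})). The only slight imprecision is the parenthetical claim that (iv) can be read off directly from the $r^4$-coefficient: after (ii) that coefficient only forces $\Tr(R_uR'_u)$ to be \emph{constant}, and one needs an extra argument (e.g.\ the symmetry $u\mapsto -u$, or your main derivation by differentiating $\Tr(R_{\dot\gamma_u}^2)=H$) to conclude it vanishes.
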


In fact, taking traces in~(\ref{eq:pow}),
one has in the harmonic case:
\begin{equation}
\label{eq:powtrharm}
\Tr(\sigma_u(r))=(n-1)\frac1r-\frac 13 Cr
-\frac1{45}H r^3-\frac1{15120}Lr^5+O(r^7)
\end{equation}
for $r\downarrow0$ and
all $u\in TM$ with $|u|=1$.
Note that Proposition~\ref{prop:constants}(i) just says that the
Einstein constant of~$M$ is~$C$; that is, $\Ric=C\Id$ on each $T_pM$.
Recall that the Ricci operator
is defined by $\<\Ric(x),y\>=\Tr(R(x,\,.\,)y)$ for all $x,y\in T_pM$ and all $p\in M$.
From Proposition~\ref{prop:constants} one can further derive:

\begin{proposition} [see~\cite{Be}, Chapter 6]
\label{prop:reqs}
If $M$ is harmonic, then for the above constants $C,H,L$ and each
$p\in M${\rm:}
\begin{itemize}
\item[(i)]
$\<R(x,\,.\,)\,.\,,R(y,\,.\,)\,.\,\>=\frac23((n+2)H-C^2)\<x,y\>$ for all $x,y\in T_pM$;
in particular{\rm:}
\item[(ii)]
$|R|_p^2=\frac23n((n+2)H-C^2)$.
\item[(iii)]
$32\bigl(nC^3+\frac92C|R|_p^2+\frac72\hat R(p)-\rcirc(p)\bigr)-27|\nabla R|_p^2=n(n+2)(n+4)L$.
\end{itemize}
\end{proposition}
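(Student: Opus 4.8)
The plan is to integrate the pointwise identity of Proposition~\ref{prop:constants}(vi) over the unit sphere $S_1(0_p)\subset T_pM$ and to read off the resulting identity between curvature invariants at~$p$. As a function of $u\in T_pM$, the quantity $\Tr(32R_uR_uR_u-9R'_uR'_u)$ is a homogeneous polynomial of degree~$6$: the Jacobi endomorphism $R_u=R(u,\,.\,)u$ is homogeneous quadratic in~$u$, while $R'_u$, the covariant derivative at $r=0$ of the endomorphism field $R_{\dot\gamma_u}$ along~$\gamma_u$, equals $(\nabla_uR)(u,\,.\,)u$ and is homogeneous cubic in~$u$. By Proposition~\ref{prop:constants}(vi) this polynomial equals the constant~$L$ on $S_1(0_p)$; being homogeneous of degree~$6$, it therefore equals $L\,|u|^6$ for every~$u$, and in particular its average over $S_1(0_p)$ is~$L$.

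The next step is to evaluate that average by means of the standard moment formula on the sphere: for any $6$-tensor $X$ on $T_pM$,
$$
\frac1{\vol(S_1(0_p))}\int_{S_1(0_p)}X_{i_1\dots i_6}\,u^{i_1}\cdots u^{i_6}\,du
=\frac1{n(n+2)(n+4)}\sum_{\pi}X_\pi,
$$
where $\pi$ runs over the $15$ pairings of $\{1,\dots,6\}$ and $X_\pi$ denotes the complete contraction of~$X$ obtained by identifying the two indices in each pair. Since $\Tr(R_uR_uR_u)$ and $\Tr(R'_uR'_u)$ are polynomials of this form with $X$ built from $R\otimes R\otimes R$, resp.\ $\nabla R\otimes\nabla R$, this converts their averages, after multiplication by $n(n+2)(n+4)$, into a universal linear combination of complete contractions of $R\otimes R\otimes R$ and of $\nabla R\otimes\nabla R$.

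It then remains to reduce this combination to the invariants appearing in the statement. Here one invokes that $M$ is Einstein, $\Ric=C\,\Id$ by Proposition~\ref{prop:constants}(i), so that $\nabla\Ric=0$ and, by the contracted second Bianchi identity, $\mathrm{div}\,R=0$. Consequently, in the $R\otimes R\otimes R$ part, every pairing that joins the two ``$u$-slots'' inside a single curvature factor produces a Ricci factor and collapses, on iteration, to a multiple of $nC^3$ or of $C|R|_p^2$, while the remaining contractions are, by the first Bianchi identity, linear combinations of the two basic cubic curvature invariants $\hat R(p)$ and $\rcirc(p)$. In the $\nabla R\otimes\nabla R$ part, every pairing that contracts a differentiation index with another index of the same factor yields $\nabla\Ric$ or $\mathrm{div}\,R$ and hence vanishes, whereas the surviving contractions reduce, via the first and second Bianchi identities together with the antisymmetries of~$R$, to a rational multiple of $|\nabla R|_p^2$ (for instance $\nabla_aR_{bcde}\,\nabla_bR_{acde}=\frac12|\nabla R|^2$). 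Collecting the rational coefficients --- and using Proposition~\ref{prop:reqs}(ii) where convenient --- yields
$$
32\bigl(nC^3+\frac92C|R|_p^2+\frac72\hat R(p)-\rcirc(p)\bigr)-27|\nabla R|_p^2=n(n+2)(n+4)L.
$$

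The conceptual content is in the first two paragraphs; the genuinely laborious part is the last one, and that is where I expect the main obstacle to lie: enumerating the $15$ pairings for each of the two $6$-tensors, deciding which contractions vanish or collapse under the Einstein condition, performing the Bianchi reductions of the survivors, and thereby pinning down the exact coefficients $\frac92$, $\frac72$, $-1$, and $27$. This is also the point at which the normalizations of $\hat R$ and $\rcirc$ must be fixed; throughout, the curvature-sign convention of~\cite{Be} has to be respected. No difficulty beyond this bookkeeping is anticipated.
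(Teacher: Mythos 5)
The paper does not actually prove Proposition~\ref{prop:reqs}: it is quoted from Chapter~6 of~\cite{Be} (with the paper correcting a misprint there, cross-checked against~\cite{Wa}), so there is no in-paper argument to compare yours against. Your outline is the standard derivation used in that literature, and it is structurally sound: $\Tr(32R_uR_uR_u-9R'_uR'_u)$ is indeed homogeneous of degree~$6$ in~$u$ (since $R'_u=(\nabla_uR)(u,\,.\,)u$), so Proposition~\ref{prop:constants}(vi) forces it to equal $L|u|^6$; the degree-$6$ moment formula with the $15$ pairings and the normalization $1/(n(n+2)(n+4))$ is correct (consistent with the degree-$4$ moments and with~(\ref{eq:int16}) in the paper); the internal pairings in the $\nabla R\otimes\nabla R$ block do vanish via $\nabla\Ric=0$ and the contracted second Bianchi identity, and the six surviving cross-pairings reduce to multiples of $|\nabla R|_p^2$ by the first and second Bianchi identities; and the cubic curvature contractions do reduce to $nC^3$, $C|R|_p^2$, $\hat R(p)$, $\rcirc(p)$ under the Einstein condition.

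Two caveats. First, you address only part~(iii). Parts~(i) and~(ii) are not corollaries of your computation but inputs to it: the pairings in $R\otimes R\otimes R$ that close up one curvature factor leave behind the quadratic contraction $\<R(x,\,.\,)\,.\,,R(y,\,.\,)\,.\,\>$, and identifying that with $\frac23((n+2)H-C^2)\<x,y\>$ is exactly statement~(i). You should therefore first run the analogous (and much shorter) degree-$4$ moment argument on the identity $\Tr(R_uR_u)=H$ of Proposition~\ref{prop:constants}(iii), polarize, and contract to get~(i), then trace to get~(ii), before attacking~(iii). Second, deferring the coefficient bookkeeping is more consequential here than usual: the paper explicitly notes that the published form of~(iii) in~\cite{Be} has a wrong coefficient ($nC^2$ in place of $nC^3$), so the rational constants $\frac92$, $\frac72$, $-1$, $27$ are precisely the content of the statement, and the proof is not complete until the $15+15$ contractions are actually enumerated and reduced. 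Your anticipation that only bookkeeping remains is reasonable, but as written the proposal establishes the shape of the identity, not the identity itself.
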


Here, the functions $\hat R, \rcirc\in C^\infty(M)$ are certain
curvature invariants of order six which are defined as follows:
If $\{e_1,\ldots,e_n\}$ is an orthonormal basis of $T_pM$ and
$R_{ijk\ell}:=\<R(e_i,e_j)e_k,e_\ell\>$, then
$$\rcirc(p):=\sum_{i,j,k,\ell,a,b} R_{ijk\ell}R_{ja\ell b}R_{aibk},\;\;\;\;
\hat R(p):=\sum_{i,j,k,\ell,a,b} R_{ijk\ell}R_{k\ell ab}R_{abij}.
$$
Note that the term $nC^3$ in Proposition~\ref{prop:reqs}(iii) reads
$nC^2$ in the corresponding equation~6.67 in~\cite{Be}, but this was
obviously a misprint (note that curvature terms of different order cannot occur
here);
see also formula (3.1) in~\cite{Wa}.

Proposition~\ref{prop:reqs}(iii) will be used in Section~\ref{sec:proof},
together with the following formula which actually holds in
any Einstein manifold; see formula~(6-7) in \cite{Li58} or formula~(11.3) in~\cite{GV}:
\begin{equation}
\label{eq:lichn}
-\frac12\Delta(|R|^2)=2C|R|^2-\hat R-4\rcirc+|\nabla R|^2,
\end{equation}
where $\Delta$ denotes the Laplace operator on functions,
that is, $\Delta f=-\sum_i \bigl(e_i(e_if)-(\nabla_{e_i}e_i)f\bigr)$ for local orthonormal
frames~$\{e_1,\ldots,e_n\}$. (Again, there is a misprint in two of the coefficients
in the corresponding formula~6.65 in~\cite{Be}.) If $M$ is harmonic, then
the left hand side of~(\ref{eq:lichn}) is zero by Proposition~\ref{prop:reqs}(ii).
Finally, we recall the following well-known observations
which will be used in Section~\ref{sec:proof}:

\begin{remark}
\label{rem:volscal}
Let $M$ be an $n$-dimensional harmonic space with volume density function $\theta$ as above.

(i)
For any $p\in M$, the volume of the geodesic sphere~$S_r(p)$ with $0<r<i(p)$
equals the volume~$\omega_{n-1}$ of the standard unit sphere $S^{n-1}$
in~$\R^n$ multiplied by the factor
$$v(r):=r^{n-1}\theta(r)
$$
Note that
$v(r)=v_u(r)$
for each unit vector $u\in TM$, where $v_u$ is the function defined in~(\ref{eq:vtheta}).
The function~$v$
determines the volume growth
function $v'/v$ of the geodesic spheres,
and thus it determines, by~(\ref{eq:volgrowth}), the function
$\Tr(\sigma_u(r))$ (which is independent of~$u$). By~(\ref{eq:powtrharm}),
the function which associates to small values of~$r$ the volume of geodesic spheres of radius~$r$
in a given harmonic space~$M$ determines the constants
$C,H,L$ (and of course $n$) associated with~$M$.

(ii)
Let $\scal=nC$ denote the scalar curvature of~$M$. Let $p\in M$, fix
some $r\in(0,i(p))$, and let $\scal^S$ denote the scalar curvature function of~$S_r(p)$.
A routine calculation using the Gauss equation shows that for each unit
vector $u\in T_pM$ we have
$$
\scal^S(\exp(ru))= \scal -2\<\Ric(\dot\gamma_u(r)),\dot\gamma_u(r)\>+(\Tr(\sigma_u(r)))^2
  -\Tr(\sigma_u(r)^2)
$$
which by the Einstein condition and equations (\ref{eq:volgrowth}) and~(\ref{eq:riccati})
implies
\begin{align*}
\scal^S(\exp(ru)) &= (n-2)C+(v'(r)/v(r))^2+\Tr(\sigma'_u(r))+\Tr(R_{\dot\gamma_u(r)})\\
&= (n-2)C+(v'(r)/v(r))^2+(v'/v)'(r)+C = (n-1)C+v''(r)/v(r).
\end{align*}
Therefore, geodesic spheres in~$M$ have constant scalar curvature,
and the respective constant depends only on the radius, not on the midpoint.
Finally, using~(i) one concludes that
the function which associates to small values of~$r$ the scalar
curvature of geodesic spheres of radius~$r$ is determined already by the function
which associates to small values of~$r$ the volume of geodesic spheres of radius~$r$.
\end{remark}

\begin{remark}
\label{rem:damekricci}
As mentioned in the Introduction, the aim of this paper is to
show that in harmonic spaces, the heat invariants
$a_0(S_r(p))=\vol(S_r(p))$ and $a_2(S_r(p))$, viewed as functions of~$r$,
together determine $|\nabla R|^2_p$\,. This is not the case for $a_0$ alone,
as manifested by certain pairs of Damek-Ricci spaces. A Damek-Ricci space~$AN$
is a certain type of solvable Lie groups with left invariant metric, namely,
the standard $1$-dimensional solvable extension of a simply connected
Riemannian nilmanifold~$N$ of Heisenberg type. The volume density function
of~$AN$ is radial and depends only on the dimensions of~$N$ and its center~\cite{DR};
see also the book~\cite{BTV}.
Within the class of Damek-Ricci spaces, there exist pairs of symmetric
spaces~$AN$ and locally nonsymmetric spaces~$AN'$ where $N$ and~$N'$ have the
same dimension and so do their centers. (In fact, certain such pairs $AN$ and $AN'$ were
the ambient manifolds used by Szab\'o in \cite{Sz01}, \cite{Sz05}; recall
the Introduction.) In~particular, geodesic spheres of the same radius
in $AN$ and~$AN'$ have the same volume. This shows that in harmonic
spaces, the function $r\mapsto\vol(S_r(p))$ alone does not determine
$|\nabla R|^2_p$\,. In turn, Remark~\ref{rem:volscal} shows that in harmonic
spaces, the function $r\mapsto a_0(S_r(p))=\vol(S_r(p))$ already determines the
function $r\mapsto a_1(S_r(p))=\frac16\int_{S_r(p)}\scal^S\,\dvol_{S_r(p)}$\,.
Therefore, we need to consider $a_2(S_r(p))$. The next section
gives some necessary preparations for this.
\end{remark}

\Section{Radial expansions of $|\Ric|^2$ and $|R|^2$ for geodesic spheres in harmonic spaces}
\label{sec:curv}

\noindent
In this section we will describe a certain coefficient in the radial
power series expansions
of the curvature invariants $|\Ric|^2$ and $|R|^2$ of geodesic spheres in harmonic spaces.
First we need the following lemma.

\begin{lemma}
\label{lem:norms}
Let $M$ be an $n$-dimensional harmonic space, and let $C$ and $H$ be the
constants from Proposition~\ref{prop:constants}.
Let $p\in M$, and let $S:=S_r(p)$ be a geodesic
sphere around~$p$ with radius $r\in(0,i(p))$, endowed with the induced Riemannian
metric. Let $u$ be a unit vector in~$T_pM$, let $\sigma:=\sigma_u(r)$ be as in
Section~\ref{sec:prelim}, and write $\sigma':=\sigma'_u(r)$.
Let $R^S$ and $\Ric^S$ denote the curvature tensor, resp.~the Ricci operator, of~$S$.
Then in the point $q:=\exp(ru)\in S$ we have{\rm:}
\begin{align*}
{\rm(i)}&\
|\Ric^S|_q^2=(n-1)C^2+2C(\Tr(\sigma))^2+(\Tr(\sigma))^2\Tr(\sigma^2)
  +2C\Tr(\sigma')+2\Tr(\sigma)\Tr(\sigma\sigma')+\Tr(\sigma'\sigma'),\\
{\rm(ii)}&\
|R^S|_q^2=\frac23(n-4)\bigl((n+2)H-C^2\bigr)+4H+2(\Tr(\sigma^2))^2-2\Tr(\sigma^4)
+4\sum_{i=1}^n\Tr\bigl(\sigma\circ R(e_i,\,.\,) \sigma e_i\bigr),\\
&\ \mbox{where $\{e_1,\ldots,e_n\}$ is an orthonormal basis of $T_q M$}.
\end{align*}
\end{lemma}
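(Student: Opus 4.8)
The plan is to express each curvature invariant of the hypersurface $S=S_r(p)$ at $q=\exp(ru)$ through the ambient curvature tensor of $M$ and the shape operator $\sigma=\sigma_u(r)$ (together with its radial covariant derivative $\sigma'=\sigma'_u(r)$) by means of the Gauss equation, and then to collapse the result using that $M$ is Einstein and the curvature identities of Propositions~\ref{prop:constants} and~\ref{prop:reqs}. Throughout, one uses that $\sigma$, $\sigma'$ and the endomorphism $R_\nu=R(\nu,\,.\,)\nu$ (with $\nu:=\dot\gamma_u(r)$) all annihilate $\nu$ and map $T_qM$ into $T_qS$, so that the trace over $T_qS$ of any product of such endomorphisms coincides with its trace over $T_qM$.

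For~(i): the Gauss equation gives, as symmetric endomorphisms of $T_qS$,
\[
\Ric^S=\Ric^M\restr{T_qS}-R_\nu+(\Tr\sigma)\,\sigma-\sigma^2=C\,\Id_{T_qS}-R_\nu+(\Tr\sigma)\,\sigma-\sigma^2,
\]
where I have inserted $\Ric^M=C\,\Id$ from Proposition~\ref{prop:constants}(i). The decisive step is to substitute $R_\nu=-\sigma^2-\sigma'$ from the Riccati equation~(\ref{eq:riccati}): the $\sigma^2$-terms cancel and one obtains the clean identity $\Ric^S=C\,\Id_{T_qS}+\sigma'+(\Tr\sigma)\,\sigma$. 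Formula~(i) then drops out by writing $|\Ric^S|^2_q=\Tr_{T_qS}\bigl((\Ric^S)^2\bigr)$, expanding the square, and using $\Tr_{T_qS}(\Id)=n-1$ and cyclicity of the trace.

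For~(ii): with an orthonormal basis $\{e_\alpha\}$ of $T_qS$, the Gauss equation reads $R^S_{\alpha\beta\gamma\delta}=R^M_{\alpha\beta\gamma\delta}+\langle\sigma e_\alpha,e_\gamma\rangle\langle\sigma e_\beta,e_\delta\rangle-\langle\sigma e_\alpha,e_\delta\rangle\langle\sigma e_\beta,e_\gamma\rangle$, and I would compute $|R^S|^2_q=\sum_{\alpha\beta\gamma\delta}(R^S_{\alpha\beta\gamma\delta})^2$ by expanding into three groups. The purely second-fundamental-form group contracts to $2(\Tr\sigma^2)^2-2\Tr(\sigma^4)$; the mixed group, after using the antisymmetry of $R^M$ in its last two indices and rewriting the contraction invariantly, equals $4\sum_{i=1}^n\Tr\bigl(\sigma\circ R(e_i,\,.\,)\sigma e_i\bigr)$ for any orthonormal basis of $T_qM$, the normal contributions dropping out since $\sigma\nu=0$. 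For the purely ambient group $\sum_{\alpha\beta\gamma\delta\in T_qS}(R^M_{\alpha\beta\gamma\delta})^2$, I would subtract from $|R^M|^2_q$ the contributions of all index-tuples containing the normal direction $\nu$; by the curvature symmetries only tuples carrying $\nu$ in exactly one slot of each of the two index pairs survive, the ``two-$\nu$'' part equals $4\Tr(R_\nu^2)=4H$ by Proposition~\ref{prop:constants}(iii), and the ``one-$\nu$'' part is evaluated from Proposition~\ref{prop:reqs}(i) with $x=y=\nu$ (giving $\sum_{a,b}|R(\nu,e_a)e_b|^2=\tfrac23((n+2)H-C^2)$) after splitting off its sub-terms that in turn contain a second $\nu$. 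Combined with $|R^M|^2_q=\tfrac23 n((n+2)H-C^2)$ from Proposition~\ref{prop:reqs}(ii), the ambient group equals $\tfrac23(n-4)((n+2)H-C^2)+4H$, and adding the three groups yields~(ii).

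The index contractions are routine; the step I expect to require the most care is the bookkeeping for $\sum_{\alpha\beta\gamma\delta\in T_qS}(R^M_{\alpha\beta\gamma\delta})^2$ — keeping track of exactly which $\nu$-containing tuples are nonvanishing and correctly peeling $\sum_{a,b,c\le n-1}\langle R(\nu,e_a)e_b,e_c\rangle^2$ out of the harmonic identity — together with fixing all signs in the Gauss equation and in the Ricci-of-a-hypersurface formula consistently with the curvature convention of~\cite{Be} used here. As a check, both final formulas should reduce, on a round geodesic sphere in Euclidean space (where $C=H=0$ and $\sigma=\tfrac1r\Id$), to the known values $|\Ric^S|^2=(n-1)(n-2)^2/r^4$ and $|R^S|^2=2(n-1)(n-2)/r^4$.
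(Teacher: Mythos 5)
Your proposal is correct and follows essentially the same route as the paper: for (i) the hypersurface Ricci formula from the Gauss equation, the Einstein condition, and the Riccati substitution $-R_\nu-\sigma^2=\sigma'$; for (ii) the same three-group expansion of the squared Gauss equation, with the purely ambient block evaluated as $|R|^2_q-4|R(\nu,\,.\,)\,.\,|^2+4|R_\nu|^2$ via Propositions~\ref{prop:reqs}(i),(ii) and~\ref{prop:constants}(iii). The inclusion--exclusion bookkeeping for the $\nu$-containing index tuples and the final constants all match the paper's computation.
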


\begin{proof}
(i)
Let $\nu$ be the outward pointing radial unit vector field as in Section~\ref{sec:prelim}.
From the Gauss equation one easily derives the following formula
whose analog is valid for submanifolds of codimension one in arbitrary
Riemannian manifolds:
$$\Ric^S_q=(\Ric-R_{\nu_q}+\Tr(\sigma)\sigma-\sigma^2)\restr{T_qS}
$$
Using the Einstein condition and the Riccati equation~(\ref{eq:riccati}),
this formula becomes in our situation:
$$\Ric^S_q=(C\Id+\Tr(\sigma)\sigma+\sigma')\restr{T_qS}
$$
(see also~\cite{NV}, p.~67).
Now one obtains the desired formula immediately, keeping in mind
that both~$\sigma$ and~$\sigma'$ are symmetric and annihilate $\nu_q$\,.

(ii)
Choose an orthonormal basis $\{e_1,\ldots,e_n\}$ of $T_q M$ such that
$e_1=\nu_q$. For all $i,j,k,\ell\in\{2,\ldots,n\}$ we have by the Gauss equation
(recall our sign convention for~$R$):
$$
\<R^S(e_i,e_j)e_k,e_\ell\>=\<R(e_i,e_j)e_k,e_\ell\>+\<\sigma e_i,e_k\>\<\sigma e_j,e_\ell\>
-\<\sigma e_j, e_k\>\<\sigma e_i,e_\ell\>.
$$
Squaring both sides and forming the sum over $i,j,k,\ell$, while recalling that
$\sigma$ is symmetric and annihilates~$e_1$, we get
\begin{align*}
|R^S|_q^2={}&\sum_{i,j,k,\ell=2}^n\<R(e_i,e_j)e_k,e_\ell\>^2
+|\sigma|^2|\sigma|^2+|\sigma|^2|\sigma|^2\\
&-2|\sigma^2|^2
+2\sum_{i,j=1}^n \<R(e_i,e_j)\sigma e_i,\sigma e_j\>
-2\sum_{i,j=1}^n \<R(e_i,e_j)\sigma e_j,\sigma e_i\>\\
={}&\sum_{i,j,k,\ell=2}^n \<R(e_i,e_j)e_k,e_\ell\>^2+2(\Tr(\sigma^2))^2-2\Tr(\sigma^4)
+4\sum_{i=1}^n\Tr\bigl(\sigma\circ R(e_i,\,.\,)\sigma e_i\bigr).
\end{align*}
The desired formula now follows from the fact that the first sum on the right hand
side is equal to
$|R|_q^2-4|R(e_1,\,.\,)\,.\,|^2+4|R_{e_1}|^2$
which by Proposition~\ref{prop:reqs}(i), (ii) and Proposition~\ref{prop:constants}(iii)
becomes $\frac23(n-4)\bigl((n+2)H-C^2\bigr)+4H$.
\end{proof}

Using the radial power series expansion of~$\sigma$ together
with the previous lemma, we will make conclusions concerning the
first few coefficients of the radial expansions of $|\Ric^S|^2$ and $|R^S|^2$.
The following proposition will be the key of the proof of the Main Theorem~\ref{thm:main}.
Actually we will use only the statements about $\alpha_2$ and $\beta_2$ in
this proposition.

\begin{proposition}
\label{prop:coeffs}
Let $M$ be an $n$-dimensional harmonic space, and let $C$, $H$, and~$L$
be the constants from Proposition~\ref{prop:constants}.
Let $p\in M$, and let $u$ be a unit vector in~$T_p M$. Then
\begin{align*}
|\Ric^{S_r(p)}|_{\exp(ru)}^2&=\alpha_{-4}r^{-4}+\alpha_{-2} r^{-2}+\alpha_0
+\alpha_2(u) r^2+O(r^3)\mbox{ and}\\
|R^{S_r(p)}|_{\exp(ru)}^2&=\beta_{-4}r^{-4}+\beta_{-2} r^{-2}+\beta_0+\beta_2(u) r^2+O(r^3)
\end{align*}
for $r\downarrow 0$, where the coefficients $\alpha_i$ and $\beta_i$
for $i\in \{-4,-2,0\}$ are constants depending only on $n$, $C$, and $H$.
Moreover,
\begin{align*}
\alpha_2(u)&=\hat\alpha_2+\frac1{16}\Tr(R'_uR'_u)\mbox{\ \ and}\\
\beta_2(u)&=\hat\beta_2
+\frac 49\sum_{i=1}^n\Tr\bigl(R_u\circ R(e_i,\,.\,)R_u e_i\bigr),
\end{align*}
where $\hat\alpha_2$ and $\hat\beta_2$ are constants depending only on $n$, $C$, $H$,
and $L$, and where $\{e_1,\ldots,e_n\}$ is an orthonormal basis of $T_p M$.
\end{proposition}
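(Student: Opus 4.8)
The plan is to compute the first few radial Taylor coefficients of $|\Ric^S|^2$ and $|R^S|^2$ by feeding the power series expansion~(\ref{eq:pow}) of $\sigma=\sigma_u(r)$ (and its radial covariant derivative $\sigma'$) into the formulas of Lemma~\ref{lem:norms}, and then simplifying everything using the harmonicity identities of Propositions~\ref{prop:constants} and~\ref{prop:reqs}. Recall from~(\ref{eq:pow}) that $\sigma = \frac1r I_u - \frac r3 R_u - \frac{r^2}{4}R'_u - (\frac1{10}R''_u + \frac1{45}R_uR_u)r^3 + O(r^4)$ (after conjugating by parallel translation, which does not affect traces or norms), and differentiating gives $\sigma' = -\frac1{r^2}I_u - \frac13 R_u - \frac r2 R'_u - 3(\frac1{10}R''_u+\frac1{45}R_uR_u)r^2 + O(r^3)$. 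The key point is that every scalar built out of $\sigma$ and $\sigma'$ appearing in Lemma~\ref{lem:norms}~(i)--(ii) — namely $\Tr(\sigma)$, $\Tr(\sigma^2)$, $\Tr(\sigma^4)$, $\Tr(\sigma')$, $\Tr(\sigma\sigma')$, $\Tr(\sigma'\sigma')$, and $\sum_i\Tr(\sigma\circ R(e_i,\cdot)\sigma e_i)$ — has a radial expansion whose coefficients are traces of products of the endomorphisms $R_u, R'_u, R''_u,\dots$. We then collect the coefficient of $r^2$ in each of $|\Ric^S|^2$ and $|R^S|^2$.

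Next I would use Proposition~\ref{prop:constants} to eliminate as many of these traces as possible in favor of the constants $n,C,H,L$. Specifically: $\Tr(R_u)=C$ and $\Tr(R^{(k)}_u)=0$ for $k\ge1$ kill all traces of single derivatives; $\Tr(R_uR_u)=H$ and its consequences $\Tr(R_uR'_u)=0$, $\Tr(R_uR''_u)=-\Tr(R'_uR'_u)$ reduce the quadratic-in-curvature traces; and $\Tr(32R_uR_uR_u-9R'_uR'_u)=L$ ties the cubic trace $\Tr(R_uR_uR_u)$ to $\Tr(R'_uR'_u)$. Since $\Tr(\sigma\sigma')$, $\Tr(\sigma'\sigma')$, $(\Tr\sigma)^2$, $(\Tr\sigma^2)^2$, $\Tr(\sigma^4)$ are built from exactly these ingredients, the upshot is that the coefficients $\alpha_{-4},\alpha_{-2},\alpha_0$ (resp.\ $\beta_{-4},\beta_{-2},\beta_0$) collapse to universal functions of $n,C,H$ — in fact the negative-order ones must already be determined by $\Tr(\sigma_u(r))$, i.e.\ by the volume function, consistently with~(\ref{eq:powtrharm}) and Remark~\ref{rem:volscal}(i). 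In the coefficient of $r^2$ the only genuinely $u$-dependent residue after applying Proposition~\ref{prop:constants} will be a single multiple of $\Tr(R'_uR'_u)$ (for $|\Ric^S|^2$) and a single multiple of $\sum_i\Tr(R_u\circ R(e_i,\cdot)R_ue_i)$ (for $|R^S|^2$); everything else folds into $\hat\alpha_2,\hat\beta_2$, which by Proposition~\ref{prop:constants}(vi) may involve $L$ as well. The claim is that these multiples are exactly $\frac1{16}$ and $\frac49$ respectively; the $\frac1{16}$ for $\Ric$ traces cleanly to the $-\frac{r^2}{4}R'_u$ term of $\sigma$ entering $\Tr(\sigma'\sigma')$ and $\Tr(\sigma)\Tr(\sigma\sigma')$, and the $\frac49$ for $R$ comes from the $-\frac r3 R_u$ term of $\sigma$ being substituted into the last sum of Lemma~\ref{lem:norms}(ii), producing $4\cdot\frac19\sum_i\Tr(R_u\circ R(e_i,\cdot)R_ue_i)\,r^2$, while the $-2\Tr(\sigma^4)$ and $2(\Tr\sigma^2)^2$ terms contribute only $n,C,H$-stuff at this order.

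For the $|R^S|^2$ case there is one extra subtlety: Lemma~\ref{lem:norms}(ii) is stated with $\{e_1,\dots,e_n\}$ an orthonormal basis of $T_qM$ at the \emph{footpoint} $q=\exp(ru)$, whereas I want the coefficient expressed via a basis of $T_pM$. This is handled by parallel-translating the basis back along $\gamma_u$ and noting that $R_{\dot\gamma_u(r)}$ conjugated by $P^{r,0}_{\gamma_u}$ is $R_u + rR'_u + O(r^2)$, so that at order $r^2$ the term $4\sum_i\Tr(\sigma\circ R(e_i,\cdot)\sigma e_i)$ with $\sigma=-\frac r3 R_u+O(r^2)$ and $R(e_i,\cdot)$ the curvature at $p$ (plus $O(r)$ corrections that land one order too high) yields exactly $\frac49 r^2\sum_i\Tr(R_u\circ R(e_i,\cdot)R_ue_i)$ with the basis now in $T_pM$; the lower-order pieces of that sum ($O(r^{-1})$ and $O(r^0)$, coming from $\sigma\sim\frac1r I_u$) evaluate via Proposition~\ref{prop:reqs}(i)--(ii) and Proposition~\ref{prop:constants} to constants in $n,C,H$. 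I expect the main obstacle to be purely clerical: keeping track of the several distinct sources of $r^2$-terms (products of a low-order and a high-order factor in bilinear trace expressions, the $R''_u$ and $R_uR_u$ corrections at order $r^3$ in $\sigma$ feeding into $\Tr(\sigma)\Tr(\sigma\sigma')$-type products, and the mixed $R_u,R'_u$ terms in the Gauss-equation sum) and verifying that after all cancellations from Proposition~\ref{prop:constants} precisely one $u$-dependent term survives in each case with the asserted rational coefficient. No new idea is needed beyond careful bookkeeping, so the write-up will consist of tabulating the expansions of the seven scalar quantities above to order $r^2$, substituting, and invoking Proposition~\ref{prop:constants} term by term.
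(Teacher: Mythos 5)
Your strategy is exactly the paper's: substitute the expansion~(\ref{eq:pow}) of $\sigma$ and its derivative into Lemma~\ref{lem:norms}, expand each scalar to order $r^2$, and reduce all surviving traces via Proposition~\ref{prop:constants}, in particular using part~(vi) to trade $\Tr(R_uR_uR_u)$ for $L$ and $\Tr(R'_uR'_u)$. The plan is viable and the handling of the basis issue in Lemma~\ref{lem:norms}(ii) via parallel translation and the Taylor expansion of $R$ along $\gamma_u$ is also what the paper does.

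Two of your heuristic attributions are, however, wrong in a way that matters if you trust them instead of the full tabulation. First, the claim that $-2\Tr(\sigma^4)$ contributes ``only $n,C,H$-stuff'' at order $r^2$ is false: its $r^2$-coefficient is $\frac{19}{420}\Tr(R'_uR'_u)+\frac{128}{945}\Tr(R_uR_uR_u)$, which by Proposition~\ref{prop:constants}(vi) equals $\frac4{945}L+\frac1{12}\Tr(R'_uR'_u)$ --- a genuinely $u$-dependent term. The reason $\beta_2(u)$ nevertheless has the stated form is a nontrivial cancellation: the Gauss-equation sum $4\sum_i\Tr(\sigma\circ R(e_i,\cdot)\sigma e_i)$ contributes, besides $\frac49\sum_i\Tr(R_u\circ R(e_i,\cdot)R_ue_i)$, exactly $-\frac1{12}\Tr(R'_uR'_u)$ at order $r^2$ (coming from the cross terms $I_u$ against $\frac{r^2}{2}R''_u$, $R'_u$ against $rR'_u$, $I_u$ against $\frac1{45}R_uR_uR_u$, etc., again after applying parts (v) and (vi)). Verifying this cancellation is the real content of the $|R^S|^2$ half of the proposition; your sketch as written would leave a spurious $\frac1{12}\Tr(R'_uR'_u)$ in $\beta_2(u)$. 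Second, the $\frac1{16}$ in $\alpha_2(u)$ does not ``trace cleanly'' to the $-\frac{r^2}4R'_u$ term: the term $2\Tr(\sigma)\Tr(\sigma\sigma')$ is a purely radial function (both factors are, by harmonicity) and contributes nothing $u$-dependent, while $\Tr(\sigma'\sigma')$ yields $\frac{37}{840}\Tr(R'_uR'_u)+\frac{62}{945}\Tr(R_uR_uR_u)$ from four distinct cross terms, which collapses to $\frac{31}{15120}L+\frac1{16}\Tr(R'_uR'_u)$ only after invoking (vi). Finally, remember also to check that the coefficients of $r^{-3}$, $r^{-1}$ and $r$ vanish (they do, by parts (ii) and (iv) of Proposition~\ref{prop:constants}), since that is part of the asserted form of the expansion.
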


\begin{proof}
We use Lemma~\ref{lem:norms} together with the power series
expansions (\ref{eq:pow}),~(\ref{eq:powtrharm}) of $\sigma:=\sigma_u(r)$ and $\Tr(\sigma)$.
Let us first consider $|\Ric^{S_r(p)}|_{\exp(ru)}^2$ and the individual
contributions of the nonconstant terms in Lemma~\ref{lem:norms}(i) to its expansion.
By~(\ref{eq:powtrharm}) we have
\begin{equation*}
(\Tr(\sigma))^2=\bigl((n-1)\frac1r-\frac13Cr-\frac1{45}Hr^3-\frac1{15120}Lr^5\bigr)^2+O(r^6)
\end{equation*}
for $r\downarrow0$.
Moreover, from the expansion~(\ref{eq:pow}) and
Proposition~\ref{prop:constants} one gets
\begin{equation}
\label{eq:trsigmasq}
\Tr(\sigma^2)=(n-1)\frac1{r^2}-\frac23 C+\frac1{15}Hr^2+\frac1{3024}Lr^4+O(r^5).
\end{equation}
Further,
\begin{equation*}
\Tr(\sigma')=\frac d{dr}\Tr(\sigma)=-(n-1)\frac1{r^2}-\frac13C-\frac1{15}Hr^2+O(r^4)
\end{equation*}
by~(\ref{eq:powtrharm}), and
\begin{equation*}
2\Tr(\sigma\sigma')=\frac d{dr}\Tr(\sigma^2)=-2(n-1)\frac1{r^3}+\frac2{15}Hr+\frac1{756}Lr^3+O(r^4)
\end{equation*}
by~(\ref{eq:trsigmasq}). Using these expansions and~(\ref{eq:powtrharm}), one easily checks
that each of the the four individual terms $2C(\Tr(\sigma))^2$, $(\Tr(\sigma))^2\Tr(\sigma^2)$, $2C\Tr(\sigma')$,
and $2\Tr(\sigma)\Tr(\sigma\sigma')$ appearing on the right hand side of Lemma~\ref{lem:norms}(i)
has the property that the corresponding coefficients of $r^{-4}, r^{-2},r^0$ depend only on $n,C,H$,
the coefficient of~$r^2$ depends only on $n,C,H,L$, and the coefficients of $r^{-3},r^{-1},r$
vanish.

It remains to consider the term $\Tr(\sigma'\sigma')$ in Lemma~\ref{lem:norms}(i).
From~(\ref{eq:pow}) we get
\begin{equation*}
\begin{split}
P^{r,0}_{\gamma_u}\circ\sigma'\circ P^{0,r}_{\gamma_u}={}&-\frac1{r^2} I_u-\frac 13 R_u
-\frac r2 R'_u -\bigl(\frac3{10}R''_u+\frac1{15}R_uR_u\bigr)r^2\\
&-\bigl(\frac19R'''_u+\frac1{18}R_uR'_u
+\frac1{18}R'_uR_u\bigr)r^3\\
&-\bigl(\frac5{168}R^{(4)}_u+\frac1{42}R_uR''_u+\frac1{42}R''_uR_u+\frac5{112}R'_uR'_u
+\frac2{189}R_uR_uR_u\bigr)r^4\\
&+O(r^5)
\end{split}
\end{equation*}
and thereby, using Proposition~\ref{prop:constants}:
\begin{equation*}
\begin{split}
\Tr(\sigma'\sigma')={}&(n-1)\frac1{r^4}+\frac23\frac C{r^2}+\frac{11}{45}H\\
&+\Bigl(\bigl(-\frac2{21}+\frac5{56}-\frac15+\frac14\bigr)\Tr(R'_uR'_u)
+\bigl(\frac4{189}+\frac2{45}\bigr)\Tr(R_uR_uR_u)\Bigr)r^2+O(r^3).
\end{split}
\end{equation*}
The coefficient of~$r^2$ in the latter expansion is
$$\frac{37}{840}\Tr(R'_uR'_u)+\frac{62}{945}\Tr(R_uR_uR_u)
$$
which by Proposition~\ref{prop:constants}(vi) turns out to be
$$\frac{62}{32\cdot945}L+\bigl(\frac{37}{840}+\frac{9\cdot 62}{32\cdot945}\bigr)\Tr(R'_uR'_u)
=\frac{31}{15120}L+\frac1{16}\Tr(R'_uR'_u).
$$
This concludes the proof of the statements concerning the expansion of
$|\Ric^{S_r(p)}|^2_{\exp(ru)}$\,.

We now turn to $|R^{S_r(p)}|^2_{\exp(ru)}$ and study the individual
contributions of the nonconstant terms in Lemma~\ref{lem:norms}(ii) to its expansion.
Squaring~(\ref{eq:trsigmasq}), we see that in the expansion of the term
$2(\Tr(\sigma^2))^2$ the
coefficients of $r^{-4},r^{-2},r^0$ depend only on $n,C,H$, the coefficient of~$r^2$
depends only on $n,C,H,L$, and the coefficients of~$r^{-3},r^{-1},r$ vanish.

Regarding the term $-2\Tr(\sigma^4)$ we obtain from~(\ref{eq:pow}):
\begin{multline*}
P^{r,0}_{\gamma_u}\circ\sigma^4\circ P^{0,r}_{\gamma_u}=
\frac1{r^4}I_u-\frac 4{3r^2} R_u-\frac 1r R'_u +\bigl(-\frac25R''_u+\frac{26}{45}R_u^2\bigr)
+\bigl(-\frac 19 R'''_u+\frac 49 (R_uR'_u+R'_uR_u)\bigr)r\\
+\Bigl(-\frac1{42}R^{(4)}_u +\bigl(-\frac 2{105}+\frac 15\bigr)(R''_uR_u+R_uR''_u)
+\bigl(-\frac 1{28}+\frac 38\bigr)R'_uR'_u
+\bigl(-\frac 8{945}+\frac4{45}-\frac 4{27}\bigr)R_u^3\Bigr)r^2\\
+O(r^3)
\end{multline*}
for $r\downarrow0$. Using Proposition~\ref{prop:constants} we get
\begin{align*}
-2\Tr(\sigma^4)={}&-2(n-1)\frac1{r^4}+\frac 83 \frac C{r^2}-\frac{52}{45}H\\
&+\Bigl(\bigl(-\frac8{105}+\frac 45+\frac1{14}-\frac 34\bigr)\Tr(R'_uR'_u)
+\bigl(\frac{16}{945}-\frac8{45}+\frac8{27}\bigr)\Tr(R_uR_uR_u)\Bigr)r^2+O(r^3).
\end{align*}
The coefficient of~$r^2$ in the latter expansion is
$$
\frac{19}{420}\Tr(R'_uR'_u)+\frac{128}{945}\Tr(R_uR_uR_u)
$$
which by Proposition~\ref{prop:constants}(vi) equals
\begin{equation}
\label{eq:1over12}
\frac{128}{32\cdot 945}L+\bigl(\frac{19}{420}+\frac{9\cdot128}{32\cdot945}\bigr)
\Tr(R'_uR'_u)=\frac4{945}L+\frac1{12}\Tr(R'_uR'_u).
\end{equation}

It remains to consider the term
$4\sum_{i=1}^n\Tr\bigl(\sigma\circ R(e_i,\,.\,)\sigma e_i\bigr)$
in Lemma~\ref{lem:norms}(ii).
We make some preliminary observations.
For $k\in\N_0$, let $R^{(k)}$, resp.~$\Ric^{(k)}$ denote the
$k$-th covariant derivative of the
curvature tensor, resp.~the Ricci operator,
along~$\gamma_u$ at $r=0$. We will use the
the Taylor series expansion of the Riemannian curvature tensor
along~$\gamma_u$ (recall that $M$ is analytic):
\begin{equation}
\label{eq:taylorR}
P_{\gamma_u}^{r,0}\circ R_{\gamma_u(r)}\circ P_{\gamma_u}^{0,r}=
\sum_{k=0}^\infty\frac{r^k}{k!}R^{(k)}
\end{equation}
Moreover, $\Ric^{(k)}=0$ for $k\ge1$ since
$M$ is Einstein.
Note that $\Ric\restr{T_pM}=\sum_{i=1}^n R_{e_i}$ and
similarly on each $T_{\gamma_u(r)}M$ if we extend $\{e_1,\ldots,e_n\}$
parallelly along~$\gamma_u$.
For any $k\in\N_0$ we have, using Proposition~\ref{prop:constants}:
\begin{equation}
\label{eq:IuIu}
\begin{split}
\sum_{i=1}^n\Tr\bigl(I_u\circ R^{(k)}(e_i,\,.\,)I_ue_i\bigr)
&=\Tr(I_u\circ\Ric^{(k)})-\Tr(I_u\circ R^{(k)}_u)\\
&=\Tr(\Ric^{(k)})-\<\Ric^{(k)}u,u\>-\Tr(R^{(k)}_u)
=\begin{cases} (n-2)C,& k=0,\\0,& k\ge1.
\end{cases}
\end{split}
\end{equation}
Moreover,
\begin{equation}
\begin{split}
\label{eq:RuIu}
\sum_{i=1}^n\Tr\bigl(R_u\circ R^{(k)}(e_i,\,.\,)I_ue_i\bigr)
&=\Tr(R_u\circ\Ric^{(k)})-\Tr(R_uR^{(k)}_u)\\
&=\begin{cases} C^2-H,&k=0,\\-\Tr(R_uR^{(k)}_u),&k\ge1,
\end{cases}
\end{split}
\end{equation}

\begin{equation}
\begin{split}
\label{eq:RpuIu}
\sum_{i=1}^n\Tr\bigl(R'_u\circ R^{(k)}(e_i,\,.\,)I_ue_i\bigr)
&=\Tr(R'_u\circ\Ric^{(k)})-\Tr(R'_uR^{(k)}_u)\\
&=\begin{cases}0,&k=0,\\-\Tr(R'_uR^{(k)}_u),&k\ge1,
\end{cases}
\end{split}
\end{equation}

\begin{align}
\label{eq:RppuIu}
\sum_{i=1}^n\Tr\bigl(R''_u\circ R(e_i,\,.\,)I_ue_i\bigr)
&=\Tr(R''_u\circ\Ric)-\Tr(R''_uR_u)=0+\Tr(R'_uR'_u),
\\
\label{eq:RuRu_Iu}
\sum_{i=1}^n\Tr\bigl(R_uR_u\circ R(e_i,\,.\,)I_ue_i\bigr)
&=\Tr(R_uR_u\circ\Ric)-\Tr(R_uR_uR_u)=CH-\Tr(R_uR_uR_u).
\end{align}
Note that for any pair of symmetric endomorphisms $F,G$ of $T_pM$
we have
\begin{equation}
\label{eq:FG}
\sum_{i=1}^n\Tr\bigl(F\circ R^{(k)}(e_i,\,.\,)Ge_i\bigr)
=\sum_{i=1}^n\Tr\bigl(G\circ R^{(k)}(e_i,\,.\,)Fe_i\bigr)
\end{equation}
by the symmetries of the curvature operator.
Keeping the expansions (\ref{eq:pow}) and~(\ref{eq:taylorR}) in mind,
we see that the expression in~(\ref{eq:IuIu}) contributes
only to the coefficient of~$r^{-2}$ in the expansion
of $\sum_{i=1}^n\Tr\bigl(\sigma\circ R(e_i,\,.\,)\sigma e_i\bigr)$,
the expression in (\ref{eq:RuIu}) contributes to
the coefficients of~$r^0$ and $r^2$ (and higher order),
the expressions in (\ref{eq:RpuIu}), (\ref{eq:RppuIu}), (\ref{eq:RuRu_Iu})
contribute to the coefficient of~$r^2$ (and higher order).
The only additional contribution
to the coefficient of~$r^2$ is given by the sum
of $\Tr\bigl(R_u\circ R(e_i,\,.\,)R_ue_i\bigr)$.
Recalling~(\ref{eq:FG}) (and multiplying $R^{(k)}$ by $1/k!$), we obtain from
(\ref{eq:pow}), \ref{prop:constants}(iv), (\ref{eq:taylorR}), and the above observations:
\begin{equation*}
\begin{split}
4\sum_{i=1}^n\Tr\bigl(\sigma\circ R(e_i,\,.\,)\sigma e_i\bigr)
={}&4\Bigl((n-2)\frac C{r^2} -\frac 23 (C^2-H)\\
&+\Bigl[\frac 2{3\cdot2!}\Tr(R_uR''_u)+\bigl(\frac 24-\frac 2{10}\bigr)\Tr(R'_uR'_u)
-\frac2{45}CH+\frac2{45}\Tr(R_uR_uR_u)\\
&\hphantom{\Big[\frac13\Tr}+\frac19\sum_{i=1}^n\Tr\bigl(R_u\circ R(e_i,\,.\,)R_ue_i\bigr)\Bigr]r^2\Bigr)+O(r^3).
\end{split}
\end{equation*}
By Proposition~\ref{prop:constants}(v), the coefficient of~$r^2$ in the latter expansion is
$$
-\frac8{45}CH-\frac2{15}\Tr(R'_uR'_u)+\frac8{45}\Tr(R_uR_uR_u)+\frac49
\sum_{i=1}^n\Tr\bigl(R_u\circ R(e_i,\,.\,)R_ue_i\bigr)
$$
By Proposition~\ref{prop:constants}(vi), the two
terms involving $\Tr(R'_uR'_u)$ and $\Tr(R_uR_uR_u)$ become
$$
\frac8{32\cdot 45}L+\bigl(-\frac2{15}+\frac{9\cdot 8}{32\cdot 45}\bigr)\Tr(R'_uR'_u)
=\frac1{180}L-\frac1{12}\Tr(R'_uR'_u).
$$
Combining this with the result for the $r^2$-coefficient of $-2\Tr(\sigma^4)$
from~(\ref{eq:1over12}), we conclude that the terms involving
$\Tr(R'_uR'_u)$ in the coefficient of $r^2$ in the power series expansion of~$|R^{S_r(p)}|_{\exp(ru)}^2$
cancel each other, and the only remaining
term apart from those which depend solely on $n,C,H,L$ is
$\frac49\sum_{i=1}^n\Tr\bigl(R_u\circ R(e_i,\,.\,)R_ue_i\bigr)$, as claimed.
\end{proof}

\begin{remark}
\label{rem:tracesv}
For the purpose of the proof of the Main Theorem~\ref{thm:main} in Section~\ref{sec:proof},
which we will perform using the heat invariants $a_0(S_r(p))=\vol(S_r(p))$ and $a_2(S_r(p))
=\frac1{360}\int_{S_r(p)}(5(\scal^S)^2-2|\Ric^S|^2+2|R^S|^2)\dvol_{S_r(p)}$,
we would actually not have needed the exact statement of the previous proposition
 -- which might, however, be interesting in its own right.
Rather, we could have restricted our attention to the term $\Tr(\sigma'\sigma')$
in the expression of~$|\Ric^S|_{\gamma_u(r)}^2$ in Lemma~\ref{lem:norms}(i),
and to the last two terms in the expression of~$|R^S|_{\gamma_u(r)}^2$
in Lemma~\ref{lem:norms}(ii).
In fact, even without the explicit calculation of the expansion of the other
terms, one easily sees that those are determined by the volume function
$r\mapsto a_0(S_r(p))=\vol(S_r(p))$
of the geodesic spheres (which is just the function~$v$ multiplied by the volume
of the standard unit sphere, see Remark~\ref{rem:ave} below).
More precisely, in the spirit of Remark~\ref{rem:volscal} we obtain
\begin{align*}
2C(\Tr(\sigma))^2&=2C(v'/v)^2,\\
(\Tr(\sigma))^2\Tr(\sigma^2)&=(v'/v)^2(-(v'/v)'-C),\\
2C\Tr(\sigma')&=2C\Tr(\sigma)'=2C(v'/v)',\\
2\Tr(\sigma)\Tr(\sigma\sigma')&=2v'/v\cdot\tfrac12(\Tr(\sigma^2))'
 =v'/v\cdot(-(v'/v)''),\\
2(\Tr(\sigma^2))^2&=2(-(v'/v)'-C)^2.
\end{align*}
\end{remark}

\Section{Proof of the Main Theorem}
\label{sec:proof}

\noindent
In this section we will first derive an integrated version
of Proposition~\ref{prop:coeffs}. Using this and the heat invariants
$a_0,a_1,a_2$ of geodesic spheres in harmonic spaces we will then
prove our Main Theorem~\ref{thm:main}.
We need the following general remark on mean values.

\begin{remark}
\label{rem:ave}
In any harmonic space~$M$, the average (or mean value)
of a smooth function~$f$ on a geodesic sphere $S_r(p)$ (with
$0<r<i(p)$) is the same as the average of $f(\exp(r\,.\,))$ over
the unit sphere $S_1(0_p)$ in~$T_pM$. More explicitly:
Let $\omega_{n-1}$ denote the volume of the $(n-1)$-dimensional
standard sphere. In particular, $\omega_{n-1}$ is the volume
of $S_1(0_p)$.
Recall from Section~\ref{sec:prelim} that $\theta(r)=\theta_u(r)$
is independent of~$u$ (and even of~$p$) by harmonicity.
We have
\begin{equation*}
\vol(S_r(p))=r^{n-1}\theta(r)\omega_{n-1}=v(r)\omega_{n-1}\,,
\end{equation*}
and for any smooth function~$f$ on $S_r(p)$,
\begin{equation*}
\begin{split}
\frac1{\vol(S_r(p))}\int_{S_r(p)}f\,\dvol_{S_r(p)}
&=\frac1{v(r)\omega_{n-1}}
\int_{S_1(0_p)}f(\exp(ru))v(r)\,du\\
&=\frac1{\omega_{n-1}}\int_{S_1(0_p)}f(\exp(ru))\,du.
\end{split}
\end{equation*}
\end{remark}

Now we can give an ``integrated'' version of Proposition~\ref{prop:coeffs}.

\begin{proposition}
\label{prop:intcoeffs}
Let $M$ be an $n$-dimensional harmonic space, and let $C$, $H$, and~$L$
be the constants from Proposition~\ref{prop:constants}.
Let $p\in M$. Then
\begin{align*}
\frac1{\vol(S_r(p))}\int_{S_r(p)} |\Ric^{S_r(p)}|^2 \dvol_{S_r(p)}&=
\alpha_{-4}r^{-4}+\alpha_{-2} r^{-2}+\alpha_0
+\overline\alpha_2 r^2+O(r^3)\mbox{ and}\\
\frac1{\vol(S_r(p))}\int_{S_r(p)} |R^{S_r(p)}|^2 \dvol_{S_r(p)}&=
\beta_{-4}r^{-4}+\beta_{-2} r^{-2}+\beta_0+\overline\beta_2 r^2+O(r^3)
\end{align*}
for $r\downarrow 0$, where the coefficients $\alpha_i$ and $\beta_i$
for $i\in \{-4,-2,0\}$ are the constants from Proposition~\ref{prop:coeffs}
depending only on $n$, $C$, and $H$.
Moreover,
\begin{align*}
\overline\alpha_2&=\tilde\alpha_2+\frac3{16n(n+2)(n+4)}|\nabla R|_p^2\mbox{\ \ and}\\
\overline\beta_2&=\tilde\beta_2
+\frac1{8n(n+2)}|\nabla R|_p^2,
\end{align*}
where $\tilde\alpha_2$ and $\tilde\beta_2$ are constants
depending only on $n$, $C$, $H$, and $L$.
\end{proposition}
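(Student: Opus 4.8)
The plan is to obtain the averaged statement from the pointwise Proposition~\ref{prop:coeffs} by integrating over the unit sphere $S_1(0_p)\subset T_pM$, and then to evaluate the two universal sphere integrals that appear. First I would use Remark~\ref{rem:ave}: for $0<r<i(p)$, the mean value over $S_r(p)$ of a smooth function equals the mean value over $S_1(0_p)$ of its pullback by $u\mapsto\exp(ru)$. Applying this to $|\Ric^{S_r(p)}|^2$ and $|R^{S_r(p)}|^2$ and inserting the expansions of Proposition~\ref{prop:coeffs}, one sees that the coefficients of $r^{-4}$, $r^{-2}$, $r^0$ are unchanged (hence still depend only on $n,C,H$) and that
\begin{align*}
\overline\alpha_2&=\hat\alpha_2+\frac1{16}\cdot\frac1{\omega_{n-1}}\int_{S_1(0_p)}\Tr(R'_uR'_u)\,du,\\
\overline\beta_2&=\hat\beta_2+\frac49\cdot\frac1{\omega_{n-1}}\int_{S_1(0_p)}\sum_{i=1}^n\Tr\bigl(R_u\circ R(e_i,\,.\,)R_ue_i\bigr)\,du,
\end{align*}
where $\{e_1,\dots,e_n\}$ is an orthonormal basis of $T_pM$. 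One point to check is that the $O(r^3)$ remainders in Proposition~\ref{prop:coeffs} are uniform in~$u$, so that termwise integration is valid; this follows from compactness of $S_1(0_p)$ together with the analyticity of~$M$.

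For the first integral, $R'_u=(\nabla_uR)(u,\,.\,)u$, so $\Tr(R'_uR'_u)=|R'_u|^2$ is a homogeneous polynomial of degree six in~$u$; integrating it by the standard formula
$$\frac1{\omega_{n-1}}\int_{S_1(0_p)}u_{i_1}u_{i_2}u_{i_3}u_{i_4}u_{i_5}u_{i_6}\,du
=\frac1{n(n+2)(n+4)}\bigl(\delta_{i_1i_2}\delta_{i_3i_4}\delta_{i_5i_6}+\cdots\bigr),$$
the sum running over all fifteen pairings, gives a universal linear combination of complete contractions of $\nabla R\otimes\nabla R$. Using the curvature symmetries, the second Bianchi identity, and the Einstein condition — under which every contraction through a covariant derivative of $\Ric$ vanishes, and the divergence of the curvature tensor vanishes by the contracted second Bianchi identity — each such term reduces to a rational multiple of $|\nabla R|_p^2$. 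The bookkeeping gives $\frac1{\omega_{n-1}}\int_{S_1(0_p)}\Tr(R'_uR'_u)\,du=\frac3{n(n+2)(n+4)}|\nabla R|_p^2$, hence $\overline\alpha_2=\hat\alpha_2+\frac3{16n(n+2)(n+4)}|\nabla R|_p^2$ with $\tilde\alpha_2:=\hat\alpha_2$ depending only on $n,C,H,L$.

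For the second integral, $\sum_{i=1}^n\Tr(R_u\circ R(e_i,\,.\,)R_ue_i)$ is homogeneous of degree four in~$u$ (each $R_u=R(u,\,.\,)u$ being quadratic). Integrating by the degree-four analogue
$$\frac1{\omega_{n-1}}\int_{S_1(0_p)}u_au_bu_cu_d\,du=\frac1{n(n+2)}\bigl(\delta_{ab}\delta_{cd}+\delta_{ac}\delta_{bd}+\delta_{ad}\delta_{bc}\bigr)$$
produces a universal linear combination of complete contractions of $R\otimes R\otimes R$. By the first Bianchi identity and the curvature symmetries, and then the Einstein condition (which turns each Ricci contraction into a multiple of the metric, and hence into a multiple of $C^3$ or of $C|R|_p^2=\frac23nC((n+2)H-C^2)$, the latter by Proposition~\ref{prop:reqs}(ii)), this combination can be written in the form $a\,\hat R(p)+b\,\rcirc(p)+(\text{const.\ in }n,C,H)$ for universal rationals $a,b$. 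Here the harmonic structure is essential: since the left-hand side of~(\ref{eq:lichn}) vanishes by Proposition~\ref{prop:reqs}(ii), we get $\hat R+4\rcirc=2C|R|^2+|\nabla R|^2$, and combining this with Proposition~\ref{prop:reqs}(iii) (and again~(ii)) and solving the $2\times2$ linear system yields
$$\rcirc(p)=\frac{17}{96}|\nabla R|_p^2+(\text{const.\ in }n,C,H,L),\qquad
\hat R(p)=\frac7{24}|\nabla R|_p^2+(\text{const.\ in }n,C,H,L).$$
Substituting back, the second integral equals a term depending only on $n,C,H,L$ plus a universal rational multiple of $|\nabla R|_p^2$, and the final arithmetic produces $\overline\beta_2=\tilde\beta_2+\frac1{8n(n+2)}|\nabla R|_p^2$ with $\tilde\beta_2$ depending only on $n,C,H,L$.

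I expect the main obstacle to be precisely the explicit evaluation of these two sphere integrals — especially, in the degree-four case, the reduction of the complete contraction of $R\otimes R\otimes R$ to the standard invariants $\hat R$, $\rcirc$ and Ricci terms via the Bianchi identities, where sign and combinatorial slips are easy to make. Once one knows, however, that the answer must take the form (constant in $n,C,H,L$)$\,+\,$(universal rational)$\,\cdot|\nabla R|_p^2$, pinning down the rational is a finite computation.
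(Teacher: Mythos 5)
Your proposal is correct and follows essentially the same route as the paper: average via Remark~\ref{rem:ave}, evaluate the degree-six moment integral to get $\frac{3}{n(n+2)(n+4)}|\nabla R|_p^2$ (the paper simply cites this from Nicolodi--Vanhecke/Gray--Vanhecke), reduce the degree-four integral to a combination of $nC^3$, $\rcirc$ and $\hat R$ via the fourth moments, and then solve the $2\times2$ system coming from Proposition~\ref{prop:reqs}(iii) and the vanishing of the left side of~(\ref{eq:lichn}) --- and your intermediate values $\hat R=\frac7{24}|\nabla R|_p^2+\mathrm{const}$, $\rcirc=\frac{17}{96}|\nabla R|_p^2+\mathrm{const}$ agree with the paper's. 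The only content you defer is the explicit evaluation of the three contraction sums, which the paper carries out as $\sum_{a,b}A_{aabb}=nC^3$, $\sum_{a,b}A_{abab}=\rcirc$, $\sum_{a,b}A_{abba}=\rcirc-\frac14\hat R$ (the last via an identity of Sakai), yielding the coefficient $\frac49\cdot\frac1{n(n+2)}\bigl(2\rcirc-\frac14\hat R\bigr)=\frac1{8n(n+2)}|\nabla R|_p^2+\mathrm{const}$.
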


\begin{proof}
For any unit vector $u$ in $T_pM$, let $\alpha_2(u)$ and $\beta_2(u)$
be the coefficients from Proposition~\ref{prop:coeffs}. Using that
proposition and Remark~\ref{rem:ave}, we only need to show that
\begin{align*}
\overline\alpha_2&:=\frac1{\omega_{n-1}}\int_{S_1(0_p)}\alpha_2(u)\,du
=\hat\alpha_2+\frac1{\omega_{n-1}}\int_{S_1(0_p)}\frac1{16}\Tr(R'_uR'_u)\,du\mbox{\ \ and}
\\
\overline\beta_2&:=\frac1{\omega_{n-1}}\int_{S_1(0_p)}\beta_2(u)\,du
=\hat\beta_2+\frac1{\omega_{n-1}}\int_{S_1(0_p)}\frac 49\sum_{i=1}^n
\Tr\bigl(R_u\circ R(e_i,\,.\,)R_ue_i\bigr)\,du
\end{align*}
are of the claimed form, where $\hat\alpha_2,\hat\beta_2$ are
as in Proposition~\ref{prop:coeffs}.
For $\overline\alpha_2$ this follows immediately (with $\tilde\alpha_2:=\hat\alpha_2$)
from the following formula
(see the proof of Theorem~5.7 of~\cite{NV}; details of the computation
can be found on p.~170 of~\cite{GV}):
\begin{equation}
\label{eq:int16}
\int_{S_1(0_p)}\Tr(R'_uR'_u)\,du=\frac{3\omega_{n-1}}{n(n+2)(n+4)}|\nabla R|_p^2
\end{equation}
This confirms the statement concerning~$\overline\alpha_2$.

We now consider $\overline\beta_2$.
Writing $u=\sum_{i=1}^n u_ie_i$ and $R_{ijk\ell}=\<R(e_i,e_j)e_k,e_\ell\>$
we have
\begin{equation}
\begin{split}
\label{eq:int49}
\sum_{i=1}^n\Tr\bigl(R_u\circ R(e_i,\,.\,)R_u e_i\bigr)
&=\sum_{i,j,k,\ell=1}^n \<R(e_i,e_j)e_k,e_\ell\>\<R_ue_i,e_k\>\<R_ue_j,e_\ell\>\\
&=\sum_{a,b,c,d=1}^n\Bigl[\sum_{i,j,k,\ell=1}^n R_{ijk\ell} R_{aibk} R_{cjd\ell}\Bigr]
 u_au_bu_cu_d.
\end{split}
\end{equation}
Note that the integral of $u_au_bu_cu_d$ over $S_1(0_p)$ is zero whenever
$\{a,b,c,d\}$ contains at least three different elements.
Abbreviating $A_{abcd}:=\sum_{i,j,k,\ell=1}^n R_{ijk\ell} R_{aibk} R_{cjd\ell}$
we have, using the Einstein condition and recalling the definition of $\hat R$
and~$\rcirc$ from Section~\ref{sec:prelim}:
\begin{align*}
\sum_{a,b=1}^n A_{aabb}&=\sum_{a,b,i,j,k,\ell=1}^n R_{ijk\ell} R_{aiak} R_{bjb\ell}
= C^2\sum_{i,j,k,\ell=1}^n R_{ijk\ell} \delta_{ik} \delta_{j\ell}
= C^2\sum_{i,j=1}^n R_{ijij} = nC^3,\\
\sum_{a,b=1}^n A_{abab}&=\sum_{a,b,i,j,k,\ell=1}^n R_{ijk\ell} R_{aibk} R_{ajb\ell}
= \sum_{a,b,i,j,k,\ell=1}^n R_{ijk\ell} R_{aibk} R_{ja\ell b}
= \rcirc(p),\\
\sum_{a,b=1}^n A_{abba}&=\sum_{a,b,i,j,k,\ell=1}^n R_{ijk\ell} R_{aibk} R_{bja\ell}
=\sum_{a,b,i,j,k,\ell=1}^n R_{ijk\ell} R_{aibk} R_{jb\ell a} = \rcirc(p)-\frac14\hat R(p),
\end{align*}
where for the last equality we have used formula~(2.7)(vi) of~\cite{Sa}; see also
formula~(2.15) of~\cite{GV}. Let $S^{n-1}\subset R^n$ denote the $(n-1)$-dimensional
standard sphere. Note that $\int_{S^{n-1}}u_1^2u_2^2\,du=\frac{\omega_{n-1}}{n(n+2)}$
and $\int_{S^{n-1}}u_1^4\,du=\frac{3\omega_{n-1}}{n(n+2)}$.
From the above equations and~(\ref{eq:int49}) we thus obtain
\begin{align*}
\int_{S_1(0_p)}\sum_{i=1}^n&\Tr\bigl(R_u\circ R(e_i,\,.\,)R_u e_i\bigr)\,du\\
&=\sum_{ \genfrac{}{}{0pt}{} {a,b\in\{1,\ldots,n\}} {a\ne b} }
[A_{aabb}+A_{abab}+A_{abba}]\int_{S^{n-1}}
u_1^2u_2^2\,du+
\sum_{a=1}^n A_{aaaa}\int_{S^{n-1}}u_1^4\,du\\
&=\sum_{a,b=1}^n [A_{aabb}+A_{abab}+A_{abba}]\frac{\omega_{n-1}}{n(n+2)}
=\bigl(nC^3+2\rcirc(p)-\frac14\hat R(p)\bigr)\frac{\omega_{n-1}}{n(n+2)}
\end{align*}
Hence,
\begin{equation}
\label{eq:b2}
\overline\beta_2=\hat\beta_2+\frac49\bigl(nC^3+2\rcirc(p)-\frac14\hat R(p)\bigr)\frac1{n(n+2)}
=\hat\beta_2+\frac{4C^3}{9(n+2)}+\bigl(\frac89\rcirc(p)-\frac19\hat R(p)\bigr)\frac1{n(n+2)}
\end{equation}
Recall from Proposition~\ref{prop:reqs}(ii), (iii) and equation~(\ref{eq:lichn}) that
\begin{align*}
112\hat R(p)-32\rcirc(p) &= 27|\nabla R|_p^2\,+\,\mbox{some constant depending only on }n,C,H,L\mbox{ and }\\
\hphantom{112}\hat R(p)+\hphantom{32}\llap{4}\rcirc(p)
&=\hphantom{27} |\nabla R|_p^2\,+\,\mbox{some constant depending only on }n,C,H,
\end{align*}
using which one easily computes that
\begin{equation*}
-\frac19\hat R(p)+\frac89\rcirc(p) =
\frac18 |\nabla R|_p^2\,+\,\mbox{some constant depending only on }n,C,H,L.
\end{equation*}
Thus we conclude from~(\ref{eq:b2}):
$$
\overline\beta_2=\tilde\beta_2+\frac1{8n(n+2)}|\nabla R|_p^2\,,
$$
where $\tilde\beta_2$ is a constant depending only on $n,C,H,L$.
\end{proof}

\medskip
\noindent
{\bf Proof of the Main Theorem~\ref{thm:main}:}

\noindent
Let $M_1$, $M_2$ be harmonic spaces, $p_1\in M_1$, $p_2\in M_2$, and assume
there exists $\eps$ in the interval $(0,\min\{i(p_1),i(p_2)\})$ such that for each $0<r<\eps$ the geodesic spheres
$S_r(p_1)$ and~$S_r(p_2)$ are isospectral. Then $\dim M_1=:n=\dim M_2$, and the heat
invariants of the geodesic spheres coincide:
$$a_k(S_r(p_1))=a_k(S_r(p_2))
$$
for each $r\in(0,\eps)$ and all $k\in\N_0$.
We want to deduce that $|\nabla R|_{p_1}^2=|\nabla R|_{p_2}^2$.
Actually this will follow using just $a_0$ and~$a_2$.

Reformulating the problem, let $M$ be an $n$-dimensional harmonic space and $p\in M$.
We want to show that for any $\eps\in(0,i(p))$, the two functions
$$\phi_k:(0,\eps)\ni r\mapsto a_k(S_r(p))\in\R
$$
with $k\in\{0,2\}$ together determine the value of $|\nabla R|_p^2$\,.
By Remark~\ref{rem:volscal}(i), the function
$$\phi_0:r\mapsto a_0(S_r(p))=\vol(S_r(p))=v(r)\omega_{n-1}
$$
determines the constants $C,H,L$ associated with~$M$ (see Section~\ref{sec:prelim}).
Recall that the scalar curvature $\scal^S=:\scal^{S_r}$ of $S_r(p)$ is constant
on the manifold $S_r(p)$, and that the 
function $v:(0,\eps)\to\R$ determines, by Remark~\ref{rem:volscal}(ii),
the function $(0,\eps)\ni r\mapsto\scal^{S_r}\in\R$.
In particular,
the function $\phi_0=v\omega_{n-1}$
also determines the function
$(0,\eps)\ni r\mapsto\int_{S_r(p)}(\scal^S)^2\dvol_{S_r(p)}=\phi_0(r)\cdot(\scal^{S_r})^2\in\R$.
By
\begin{align*}
\phi_2(r)=a_2(S_r(p))&=\frac1{360}\int_{S_r(p)}\bigl
(5(\scal^S)^2-2|\Ric^S|^2+2|R^S|^2\bigr)\dvol_{S_r(p)}
\end{align*}
it follows that the functions $\phi_0$ and $\phi_2$ together determine
the function
$$(0,\eps)\ni r\mapsto\frac1{\vol(S_r(p))}\int_{S_r(p)}\bigl(|R^S|^2-|\Ric^S|^2\bigr)
\dvol_{S_r(p)}\in\R.
$$
By Proposition~\ref{prop:intcoeffs}, the $r^2$-coefficient in the
power series expansion of this function is the sum of the term
$$\Bigl(\frac1{8n(n+2)}-\frac3{16n(n+2)(n+4)}\Bigr)|\nabla R|_p^2
=\frac{2n+5}{16n(n+2)(n+4)}|\nabla R|_p^2
$$
and $\tilde\beta_2-\tilde\alpha_2$.
Recall that the latter is a constant depending only on
$n,C,H,L$, and is thus determined by~$\phi_0$. We conclude that the functions
$\phi_0$ and $\phi_2$ together determine $|\nabla R|_p^2$\,, as claimed.
\hfill\qed

\Section{Geodesic balls}
\label{sec:balls}

\noindent
In this section we will prove the following version of the Main Theorem~\ref{thm:main}
for geodesic balls:

\begin{theorem}
\label{thm:balls}
Let $M_1$ and $M_2$ be harmonic spaces, and let $p_1\in M_1$,
$p_2\in M_2$. If there exists $\eps>0$ such that for each
$r\in(0,\eps)$ the geodesic balls $B_r(p_1)$ and $B_r(p_2)$
are Dirichlet isospectral, then $|\nabla R|_{p_1}^2=|\nabla R|_{p_2}^2$.
The same holds if the assumption of Dirichlet isospectrality
is replaced by the assumption of Neumann isospectrality.
\end{theorem}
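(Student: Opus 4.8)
\textbf{Proof proposal for Theorem~\ref{thm:balls}.}

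The plan is to run the same strategy as in the proof of the Main Theorem~\ref{thm:main}, but with geodesic balls in place of geodesic spheres and with the heat invariants $a_k(B_r(p))$ for Dirichlet (respectively Neumann) boundary conditions in place of $a_k(S_r(p))$. First I would recall that for a compact manifold with boundary, the heat trace has an asymptotic expansion whose coefficients $a_k$ are again integrals of curvature invariants over the interior plus integrals over the boundary of invariants built from the second fundamental form (for Dirichlet and Neumann these differ only in the sign of certain boundary terms); see~\cite{Gi}. In particular, $a_0(B_r(p))=\vol(B_r(p))$, and, as in the closed case, $a_2(B_r(p))$ is a linear combination of $\int_{B_r(p)}(5\scal^2-2|\Ric|^2+2|R|^2)\,\dvol$, a boundary term $\int_{\partial B_r(p)}(\ldots)\scal^S\,\dvol$ of the intrinsic scalar curvature of the sphere and similar second-fundamental-form terms, and a term $\int_{B_r(p)}\scal\,\dvol$ weighted by a curvature-free factor coming from the boundary. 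Since $M$ is harmonic, the ambient curvature invariants $\scal$, $|\Ric|^2$, $|R|^2$ are constants determined by $n,C,H$ (Propositions~\ref{prop:constants} and~\ref{prop:reqs}), so the interior contribution to $a_2(B_r(p))$ is simply $\vol(B_r(p))$ times a constant depending only on $n,C,H$; and $\vol(B_r(p))=\omega_{n-1}\int_0^r v(s)\,ds$ is determined by the function $\phi_0:r\mapsto a_0(B_r(p))$, hence so are $i(p)$-range, the constants $C,H,L$ (by Remark~\ref{rem:volscal}(i)), and the whole interior part.

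Next I would isolate the boundary contribution. All boundary terms in $a_2(B_r(p))$ are integrals over $S_r(p)$ of universal polynomials in the intrinsic curvature of $S_r(p)$, the shape operator $\sigma_u(r)$ of $S_r(p)$ in $M$, and the ambient curvature restricted to $S_r(p)$ — precisely the quantities whose radial expansions were analysed in Section~\ref{sec:curv}. By Remark~\ref{rem:ave}, the mean value over $S_r(p)$ of any such quantity equals its average over the unit sphere in $T_pM$, so Proposition~\ref{prop:intcoeffs} (together with Remark~\ref{rem:volscal} for the $\scal^S$ and $\Tr(\sigma^k)$ pieces) gives the radial power series of each boundary integrand: the coefficients of $r^j$ for $j\le 0$, as well as the $\scal^S$-terms and all $\Tr(\sigma^k)$-terms to all orders, are determined by $\phi_0$, while the genuinely new information at order $r^2$ in $|\Ric^S|^2$ and $|R^S|^2$ is a nonzero universal multiple of $|\nabla R|_p^2$. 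Since the boundary integral carries a factor $\vol(S_r(p))=\omega_{n-1}v(r)$ and $v(r)=r^{n-1}+\ldots$, multiplying the order-$r^2$ mean value by $v(r)$ shifts the relevant coefficient to order $r^{n+1}$; equivalently, forming the quotient $a_2(B_r(p))/a_0(B_r(p))$ — which is legitimate since $a_0(B_r(p))\ne0$ — one gets that the coefficient of $r^3$ in its radial expansion is a nonzero multiple of $|\nabla R|_p^2$ plus a constant determined by $n,C,H,L$, hence by $\phi_0$. (I should check that the Dirichlet and Neumann versions differ only by known constants times $\phi_0$-determined quantities; the $|\nabla R|_p^2$-term comes from the intrinsic-curvature-squared boundary invariants, whose coefficients have the same sign in both cases, so the conclusion holds verbatim for both.)

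Finally, as in Section~\ref{sec:proof}, I would assemble the argument: if $B_r(p_1)$ and $B_r(p_2)$ are Dirichlet (resp.\ Neumann) isospectral for all small $r$, then $a_k(B_r(p_1))=a_k(B_r(p_2))$ for all $k$, in particular for $k\in\{0,2\}$; the common function $\phi_0$ determines $n$ and $C,H,L$, hence determines the $\phi_0$-part of the $r^3$-coefficient of $a_2(B_r(p))/a_0(B_r(p))$, and matching the full $r^3$-coefficients forces $|\nabla R|_{p_1}^2=|\nabla R|_{p_2}^2$. The main obstacle I anticipate is purely bookkeeping: writing down the precise form of the boundary part of $a_2$ for a manifold with boundary (with the correct universal constants, and in both the Dirichlet and Neumann normalisations), verifying that every boundary term other than the one carrying $|\nabla R|_p^2$ is either $\phi_0$-determined or enters with a coefficient that makes the nonzero-multiple claim survive, and tracking the shift by the factor $v(r)$ so as to land the $|\nabla R|_p^2$-information at the claimed order $r^3$ in the quotient. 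None of this is conceptually hard given Section~\ref{sec:curv} and Section~\ref{sec:proof}, but it requires care with the boundary heat-invariant formulas from~\cite{Gi}.
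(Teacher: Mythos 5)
Your overall strategy --- use $a_0$ and $a_2$ of the balls, observe that in a harmonic space the interior integrand of $a_2$ is constant so the interior part is determined by $\phi_0=a_0$, and then dig the $|\nabla R|_p^2$-information out of the boundary contribution --- is the same as the paper's. But there is a genuine gap in how you identify where $|\nabla R|_p^2$ lives in the boundary term. The Branson--Gilkey boundary integrand for $a_2^D$ (and $a_2^N$) is \emph{not} built from the intrinsic invariants $|\Ric^S|^2$ and $|R^S|^2$ of the boundary sphere; it consists of $\nu(\scal)$, $\scal\cdot\Tr(\sigma)$, $\Tr(R_\nu)\Tr(\sigma)$, $\Tr(R_\nu\circ\sigma)$, $\Tr((\Ric-R_\nu)\circ\sigma)$, and the pure shape-operator terms $(\Tr(\sigma))^3$, $\Tr(\sigma)\Tr(\sigma^2)$, $\Tr(\sigma^3)$. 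There are no ``intrinsic-curvature-squared boundary invariants'' in the $a_2$ of a manifold with boundary, so Proposition~\ref{prop:intcoeffs} cannot be invoked as the source of the $|\nabla R|_p^2$-term. What is needed instead (and what the paper computes) are the $r^3$-coefficients of the spherical mean values of $\Tr(R_\nu\circ\sigma)$ and $\Tr(\sigma^3)$, which turn out to be $-\frac1{1440}L+\frac1{32n(n+2)(n+4)}|\nabla R|_p^2$ and $\frac1{30240}L-\frac1{32n(n+2)(n+4)}|\nabla R|_p^2$, respectively.

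This makes the non-cancellation check, which you dismiss with a sign argument, the actual crux. The two $|\nabla R|_p^2$-contributions above are exact negatives of each other (this is forced by the Riccati equation, since $\Tr(R_\nu\circ\sigma)+\Tr(\sigma^3)=\frac12\Tr(\sigma)''$ depends only on $n,C,H,L$), so the surviving multiple of $|\nabla R|_p^2$ is proportional to the \emph{difference} of the universal coefficients with which these two terms enter $a_2$: namely $16-\frac{320}{21}=\frac{16}{21}$ in the Dirichlet case and $16-\frac{32}{3}=\frac{16}{3}$ in the Neumann case. Had these coefficients been equal, the argument would collapse; your proposal never engages with this possibility, because your model of the boundary term places $|\nabla R|_p^2$ where it manifestly cannot cancel. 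The remaining boundary terms ($\nu(\scal)=0$ in the harmonic case, and everything else expressible through $v'/v$, its derivatives, and the constants $C,H,L$) are indeed $\phi_0$-determined as you say, and the final assembly is as in Section~\ref{sec:proof}; but the heart of the proof is precisely the pair of expansions above together with the verification that the Branson--Gilkey coefficients do not conspire to kill the $|\nabla R|_p^2$-term.
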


This theorem implies the corresponding analog of our Main Corollary~\ref{cor:main}:

\begin{corollary}
\label{cor:balls}
Let $M_1$ and $M_2$ be harmonic spaces. Assume that the Dirichlet
isospectrality
hypothesis of Theorem~\ref{thm:balls} is satisfied for
\emph{each} pair of points $p_1\in M_1$, $p_2\in M_2$.
Then $M_1$ is locally symmetric if and only if $M_2$ is locally
symmetric. The same holds if the assumption of Dirichlet isospectrality
is replaced by the assumption of Neumann isospectrality.
\end{corollary}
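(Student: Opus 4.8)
The plan is to deduce the stated equivalence directly from Theorem~\ref{thm:balls}, in exactly the way Corollary~\ref{cor:main} follows from the Main Theorem~\ref{thm:main}. The one conceptual input needed is the classical fact recalled in the Introduction: a Riemannian manifold is locally symmetric precisely when $\nabla R=0$ holds at every point, which is in turn equivalent to the vanishing of the function $q\mapsto|\nabla R|_q^2$ on the entire manifold. Hence it suffices to transfer the identical vanishing of $|\nabla R|^2$ between $M_1$ and $M_2$.

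First I would record that under the Dirichlet isospectrality hypothesis assumed for \emph{every} pair of points, Theorem~\ref{thm:balls} gives
$$|\nabla R|_{p_1}^2=|\nabla R|_{p_2}^2 \quad\text{for all }p_1\in M_1,\ p_2\in M_2.$$
Now suppose $M_1$ is locally symmetric, so that $|\nabla R|_{p_1}^2=0$ for every $p_1\in M_1$. Fixing an arbitrary point $p_2\in M_2$ and choosing any $p_1\in M_1$, the displayed equality yields $|\nabla R|_{p_2}^2=|\nabla R|_{p_1}^2=0$. Since $p_2$ was arbitrary, $|\nabla R|^2$ vanishes identically on $M_2$, that is, $M_2$ is locally symmetric. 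Interchanging the roles of $M_1$ and $M_2$ gives the reverse implication, so $M_1$ is locally symmetric if and only if $M_2$ is. The Neumann case is handled identically, invoking instead the Neumann statement of Theorem~\ref{thm:balls}.

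There is no genuine obstacle at this stage: the entire analytic and geometric content — namely, extracting the value of $|\nabla R|_p^2$ from the heat invariants $a_0(B_r(p))$ and $a_2(B_r(p))$ of geodesic balls — is already contained in Theorem~\ref{thm:balls}. The only point worth stressing is that the hypothesis is assumed for \emph{each} pair of points; this is exactly what permits fixing a point in one manifold while letting the matching point in the other range freely, thereby promoting the pointwise identity of Theorem~\ref{thm:balls} to the global statement about local symmetry.
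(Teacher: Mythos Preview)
Your proof is correct and matches the paper's approach: the paper states the corollary immediately after Theorem~\ref{thm:balls} as its direct consequence, without giving a separate proof, just as Corollary~\ref{cor:main} is stated as an immediate consequence of Theorem~\ref{thm:main}. Your argument spells out precisely the intended one-line deduction via the equivalence of local symmetry with the identical vanishing of $|\nabla R|^2$.
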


For the proof of Theorem~\ref{thm:balls}
we will use the heat invariants for manifolds with boundary.
Let $M$ be an $n$-dimensional Riemannian manifold, and let $B\subset M$
be a compact domain with smooth boundary.
If $\Delta$ denotes the Laplace operator on~$B$ with Dirichlet boundary
conditions then there is an asymptotic expansion
$$
\Tr(\exp(-t\Delta))\sim(4\pi t)^{-n/2}\sum_k a_k^D(B) t^k
$$
for $t\downarrow0$,
where $k=0, 0.5, 1, 1.5, \ldots$ ranges over the nonnegative half
integers (see~\cite{BG}). For the Laplace operator on~$B$ with Neumann boundary
conditions the analog of this formula holds with certain coefficients
$a_k^N(B)$. The coefficients $a_k^D(B)$ (resp.~$a_k^N(B)$) are given by
certain curvature integrals over $B$ and~$\partial B$. One has
$a_0^D(B)=a_0^N(B)=\vol(B)$ and $a_{0.5}^D(B)=-a_{0.5}^N(B)=-\frac{\sqrt{\pi}}2
\vol(\partial B)$ (see~\cite{BG}). In the proof of Theorem~\ref{thm:balls} we will use
the explicit formulas for $a_2^D(B)$ and $a_2^N(B)$ from~\cite{BG}.
Let $\nu$ denote the outward pointing
unit vector field on the boundary~$\partial B$ of~$B$,
and let $\sigma=\nabla\nu$ be the associated
shape operator.
Let $\scal$, $\Ric$, $R$ always refer to the usual
objects on~$M$ (not to the ones associated with the induced metric on~$\partial B$). Then
\begin{align*}
a_2^D(B)&=\frac1{360}\biggl[\int_B \bigl(-12\Delta(\scal)+5\scal^2-2|\Ric|^2+2|R|^2\bigr)\dvol_B\\
&+\int_{\partial B}\Bigl(18\nu(\scal)+20\scal\cdot\Tr(\sigma)
-4\Tr(R_\nu)\Tr(\sigma)+12\Tr(R_\nu\circ\sigma)\\
&\hphantom{{}+\int_{\partial B}\Bigl(}-4\Tr((\Ric-R_\nu)\circ\sigma)
+\frac{40}{21}(\Tr(\sigma))^3-\frac{88}7\Tr(\sigma)\Tr(\sigma^2)
+\frac{320}{21}\Tr(\sigma^3)\Bigr)\dvol_{\partial B}\biggr],
\end{align*}
\begin{align*}
a_2^N(B)&=\frac1{360}
\biggl[\int_B \bigl(-12\Delta(\scal)+5\scal^2-2|\Ric|^2+2|R|^2\bigr)\dvol_B\\
&+\int_{\partial B}\Bigl(-42\nu(\scal)+20\scal\cdot\Tr(\sigma)
-4\Tr(R_\nu)\Tr(\sigma)+12\Tr(R_\nu\circ\sigma)\\
&\hphantom{{}+\int_{\partial B}\Bigl(}-4\Tr((\Ric-R_\nu)\circ\sigma)
+\frac{40}{3}(\Tr(\sigma))^3+8\Tr(\sigma)\Tr(\sigma^2)
+\frac{32}3\Tr(\sigma^3)\Bigr)\dvol_{\partial B}\biggr].
\end{align*}

If $M$ is harmonic then, by the results of Section~\ref{sec:prelim},
the previous formulas simplify to
\begin{equation}
\label{eq:a2d}
\begin{split}
a_2^D(B)={}&\frac1{360}\Bigl[\vol(B)\cdot\bigl(5(nC)^2-2nC^2+\frac43n((n+2)H-C^2)\bigr)\\
&+\int_{\partial B}\Bigl(20nC\Tr(\sigma)
-8C\Tr(\sigma)+16\Tr(R_\nu\circ\sigma)\\
&\hphantom{{}+\int_{\partial B}\bigl(}
+\frac{40}{21}(\Tr(\sigma))^3-\frac{88}7\Tr(\sigma)\Tr(\sigma^2)
+\frac{320}{21}\Tr(\sigma^3)\Bigr)\dvol_{\partial B}\Bigr],
\end{split}
\end{equation}
\begin{equation}
\begin{split}
\label{eq:a2n}
a_2^N(B)&=\frac1{360}
\Bigl[\vol(B)\cdot\bigl(5(nC)^2-2nC^2+\frac43n((n+2)H-C^2)\bigr)\\
&+\int_{\partial B}\Bigl(20nC\Tr(\sigma)
-8C\Tr(\sigma)+16\Tr(R_\nu\circ\sigma)\\
&\hphantom{{}+\int_{\partial B}\bigl(}
+\frac{40}3(\Tr(\sigma))^3+8\Tr(\sigma)\Tr(\sigma^2)
+\frac{32}3\Tr(\sigma^3)\Bigr)\dvol_{\partial B}\Bigr].
\end{split}
\end{equation}

In the proof of Theorem~\ref{thm:balls} we will follow a similar strategy
as in the proof of Theorem~\ref{thm:main}. To this end, we need some preliminary
results
in the special case that $B=B_r(p)$ with $r\in(0,i(p))$ and $M$ is harmonic.
We remark -- without going into details this time -- that one can compute in
this case,
using equation~(\ref{eq:pow}),
Proposition~\ref{prop:constants},
and the Taylor series expansion
$$
P_{\gamma_u}^{r,0}\circ(R_\nu)_{\gamma_u(r)}\circ P_{\gamma_u}^{0,r}=\sum_{k=0}^\infty
\frac{r^k}{k!}R^{(k)}_u,
$$
that the $r^3$-coefficient
in the power series expansion of $\Tr(R_\nu\circ\sigma)$ equals
$-\frac1{1440}L+\frac1{96}\Tr(R'_uR'_u)$,
and that the $r^3$-coefficient in the power series expansion of $\Tr(\sigma^3)$
equals
$\frac1{30240}L-\frac1{96}\Tr(R'_uR'_u)$.
(That the contributions of $\Tr(R'_uR'_u)$ in these terms are negatives
of each other can also be checked as follows: Using~(\ref{eq:riccati}) twice,
we have $\Tr(R_\nu\circ\sigma)+\Tr(\sigma^3)=
-\Tr(\sigma'\sigma)=-\frac12\Tr(\sigma^2)'=\frac12\Tr(R_\nu+\sigma')'
=\frac12\Tr(\sigma)''$ whose $r^3$-coefficient
indeed depends only on~$L$ by~(\ref{eq:powtrharm}).)
Using~(\ref{eq:int16}), we conclude that
the $r^3$-coefficient in the power series expansion of
$r\mapsto\frac1{vol(S_r(p))}\int_{S_r(p)}\Tr(R_\nu\circ\sigma)\dvol_{S_r(p)}$
is
\begin{equation}
\label{rsigcoeff}
-\frac1{1440}L+\frac1{32n(n+2)(n+4)}|\nabla R|_p^2\,,
\end{equation}
Similarly, the $r^3$-coefficient in the power series expansion of
$r\mapsto\frac1{vol(S_r(p))}\int_{S_r(p)}\Tr(\sigma^3)\dvol_{S_r(p)}$
is
\begin{equation}
\label{sig3coeff}
\frac1{30240}L-\frac1{32n(n+2)(n+4)}|\nabla R|_p^2\,.
\end{equation}

\medskip
\noindent
{\bf Proof of Theorem~\ref{thm:balls}:}

\noindent
Let $M_1$, $M_2$ be harmonic spaces, $p_1\in M_1$, $p_2\in M_2$, and assume
there exists $\eps$ in the interval $(0,\min\{i(p_1),i(p_2)\})$
such that for each $0<r<\eps$ the geodesic spheres
$B_r(p_1)$ and~$B_r(p_2)$ are Dirichlet isospectral (resp.~Neumann isospectral).
Then $\dim M_1=:n=\dim M_2$, and the heat
invariants of the geodesic spheres coincide:
$$a^D_k(B_r(p_1))=a^D_k(B_r(p_2)),\mbox{\ \ resp.\ \ }a^N_k(B_r(p_1))=a^N_k(B_r(p_2))
$$
for each $r\in(0,\eps)$ and all $k\in\N_0$.
We want to deduce that $|\nabla R|_{p_1}^2=|\nabla R|_{p_2}^2$.
Actually this will follow using just $a_0$ and~$a_2$.
(We remark without proof here that, viewed as functions of~$r$, the
heat coefficients $a_{0.5}$, $a_1$, and $a_{1.5}$
do actually not contain more information than $a_0$ in our situation.)

We first consider the case of Dirichlet conditions.
Similarly as in the proof of Theorem~\ref{thm:main},
we reformulate the problem as follows:
Let $M$ be an $n$-dimensional harmonic space and $p\in M$.
We want to show that for any $\eps\in(0,i(p))$, the two functions
$$\psi^D_k:(0,\eps)\ni r\mapsto a^D_k(B_r(p))\in\R
$$
with $k\in\{0,2\}$ together determine the value of $|\nabla R|_p^2$\,.
Note that the function
$$\psi^D_0:r\mapsto a^D_0(B_r(p))=\vol(B_r(p))
$$
determines its own derivative which is just
$$r\mapsto\vol(S_r(p))=v(r)\omega_{n-1}
$$
(see the previous section). By Remark~\ref{rem:volscal}(i), we conclude
that $\psi^D_0$ again determines the constants $C,H,L$ associated with~$M$.
Moreover, the function $v:(0,\eps)\to\R$ determines the radial
functions $\Tr(\sigma)=v'/v$ and $\Tr(\sigma^2)=-(v'/v)'-C$ (compare
Remark~\ref{rem:tracesv}).
By~(\ref{eq:a2d}) it now follows that $\psi^D_0$ and $\psi^D_2$ together
determine the function
$$(0,\eps)\ni r\mapsto\frac1{\vol(S_r(p))}\int_{S_r(p)}\bigl(
16\Tr(R_\nu\circ\sigma)+\frac{320}{21}\Tr(\sigma^3)\bigr)
\dvol_{S_r(p)}\in\R.
$$
Recalling (\ref{rsigcoeff}) and~(\ref{sig3coeff}), we see that
the $r^3$-coefficient in the
power series expansion of the latter function is the sum of
$$
\frac1{32n(n+2)(n+4)}\bigl(16-\frac{320}{21}\bigr)|\nabla R|_p^2
=\frac1{42n(n+2)(n+4)}|\nabla R|_p^2
$$
and a term depending only on~$L$.
Since $L$ is determined by~$\psi^D_0$, we conclude that $\psi^D_0$
and $\psi^D_2$ together determine $|\nabla R|_p^2$\,, as claimed.

In the Neumann case, letting $\psi^N_k(r):=a^N_k(B_r(p))$, we again
have $\psi^N_0(r)=\vol(B_r(p))=\psi^D_0(r)$. Proceeding exactly as in the Dirichlet
case, using~(\ref{eq:a2n}) this time,
we see that $\psi^N_0$ and $\psi^N_2$ together determine the sum of
$$
\frac1{32n(n+2)(n+4)}\bigl(16-\frac{32}3\bigr)|\nabla R|^2_p
=\frac1{6n(n+2)(n+4)}|\nabla R|_p^2
$$
and a term depending only on~$L$. Hence they determine $|\nabla R|_p^2$\,,
as claimed.
\hfill\qed

\end{document}